\newtheoremstyle{stile}
	{3pt}
	{3pt}
	{\itshape}
	{.5cm}
	{\scshape}
	{:}
	{1mm}
	{}
\theoremstyle{stile}
\newtheorem{defin}{Definition}[section]
\newtheorem{thm}[defin]{Theorem}
\newtheorem{lem}[defin]{Lemma}
\newtheorem{cor}[defin]{Corollary}
\theoremstyle{remark}
\newtheorem{oss}[defin]{Remark}
\newtheorem{osss}[defin]{Remarks}
\newcommand{\Q}{\mathbb{Q}}
\newcommand{\A}{\mathbb{A}}
\renewcommand{\O}[1]{\mathcal{O}_{#1}}
\newcommand{\id}[1]{\mathfrak{#1}}
\renewcommand{\phi}{\varphi}
\newcommand{\un}[1]{\underline{#1}}
\DeclareMathOperator{\Spec}{Spec}
\DeclareMathOperator{\Frac}{Frac}
\DeclareMathOperator{\height}{ht}
\DeclareMathOperator{\spec}{sp}
\DeclareMathOperator{\trdeg}{trdeg}
\newcommand{\leqnomode}{\tagsleft@true\let\veqno\@@leqno}
\newcommand{\reqnomode}{\tagsleft@false\let\veqno\@@eqno}
\begin{document}

\title{Hilbert specialization of parametrized varieties}
\author{Angelo Iadarola}
\date{}
\email{angelo.iadarola@univ-lille.fr}
\address{Univ. Lille, CNRS, UMR 8524 - Laboratoire Paul Painlevé, F-59000 Lille, France}
\subjclass[2020]{Primary 12E05, 12E25, 12E30; Sec. 11C08, 14Rxx}
\keywords{Irreducibility of polynomials, Families of affine varieties, Hilbertian Fields, Bertini theorems}
\begin{abstract}
Hilbert specialization is an important tool in Field Arithmetic and Arithmetic Geometry, which has usually been intended for polynomials, hence hypersurfaces, and at
scalar values. In this article, first, we extend this tool to prime ideals, hence affine varieties, and
offer an application to the study of the irreducibility of the intersection of varieties. Then, encouraged by recent results,
we consider the more general situation in which the specialization is done at polynomial values, instead of scalar values.
\end{abstract}

\pagestyle{fancy}
\fancyhf{}
\chead{\footnotesize{Hilbert specialization of parametrized varieties}}
\rhead{\footnotesize{\thepage}}

\maketitle
\thispagestyle{empty}

\section{Introduction}
Hilbert Irreducibility Theorem has been a core result in Field Arithmetic for many decades. A simple form, see for example \cite[Page 218]{fj}, says that, given an irreducible polynomial
$P(T,Y)$ in $\Q(T)[Y]$, one can find infinitely many $t\in\Q$ such that the so-called {\itshape
specialized} polynomial $P(t,Y)$ is irreducible in $\Q[Y]$.\par
This result has been generalized under many aspects. First of all, the notion of {\itshape Hilbertian 
field} has been introduced to identify all those fields $K$ for which the previous statement is verified
in the case of polynomials, which are separable in $Y$, if we replace $\Q$ with $K$, see for example \cite[Page 218]{fj}.
Moreover, if $K$ is of characteristic $0$ or imperfect, the same result holds if we replace a separable irreducible polynomial in two variables $P(T,Y)$ in $K(T)[Y]$ with several irreducible
polynomials $P_1(\un{T},\un{Y}),\ldots,P_n(\un{T},\un{Y})$ in $K(\un{T})[\un{Y}]$ in two arrays of variables, $\un{T}=(T_1,\ldots,T_r)$ and $\un{Y}=(Y_1,\ldots,Y_s)$ for
$r,s$ positive integers: we can find a Zariski-dense subset $H\subset\A_K^r$ such that the polynomial $P_i(\un{t},\un{Y})$
is irreducible in $K[\un{Y}]$ for every $\un{t}\in H$ and $i=1,\ldots,n$, see for example \cite[Section 12.1]{fj}.\par
If we look at this statement from a geometric point of view, another potential generalization arises
naturally. Giving an irreducible polynomial $P(\un{T},\un{Y})$ in $K(\un{T})[\un{Y}]$ is equivalent
to giving an irreducible $K(\un{T})$-hypersurface\footnote{The precise definition will be given in the following
section.} $V_{K(\un{T})}(P(\un{T},\un{Y}))$ in the $s$-dimensional affine space $\A_{K(\un{T})}^s$ over
$K(\un{T})$. In these terms, for $K$ Hilbertian and
of characteristic $0$ or imperfect, Hilbert Irreducibility says that for
a Zariski-dense set of choices $\un{t}$ of the variables $\un{T}$ in $K^r$, the {\itshape specialized} algebraic set
$V_K(P(\un{t},\un{Y}))\subset\A_K^s$ is an irreducible
$K$-hypersurface. It is then natural to ask if an analogous result holds in the case of
a $K(\un{T})$-variety of codimension bigger than $1$. In algebraic terms this is equivalent to
finding values $\un{t}$ in $K^r$ of $\un{T}$ such that a nonzero prime ideal
$\id{p}_T$ in $K(\un{T})[\un{Y}]$ remains a nonzero prime ideal of
$K[\un{Y}]$ when {\itshape specializing} $\un{T}$ at $\un{t}$.
This is indeed one of the problems we are addressing in this article.\par
We want to make another step further. Until now the Hilbert {\itshape specialization} has typically
been intended for scalar values in $K^r$. However, there is a recent result, see
\cite{bdn}, in which a polynomial version of the Schinzel hypothesis is proved by specializing at polynomial values instead of scalar values. Given irreducible polynomials $P_i(\un{T},\un{Y})$, for
$i=1,\ldots,n$, in a polynomial ring
$R[\un{T},\un{Y}]$ for $R$ an integral domain,
the variables $\un{T}$ are replaced by polynomials $\un{Q}(\un{Y})=(Q_1(\un{Y}),\ldots,Q_r(\un{Y}))$,
in the other variables and, under appropriate assumptions, all the {\itshape specialized} polynomials $P_i\left(\un{Q}(\un{Y}),\un{Y}\right)$
are shown to remain irreducible in $R[\un{Y}]$. This recent development encourages to pursue the study of the specialization at polynomials. Indeed, another goal
of this article will be to apply the results of the first part to obtain a more general version of them
where the {\itshape specialization} of the variables $\un{T}$ occurs at polynomials in
$(K[\un{Y}])^r$ instead of scalars in $K^r$.
\subsection{Notation and main results}
In this paper, given a field $F$, a set of variables $\un{X}=(X_1,\ldots,X_m)$ and
polynomials $f_1,\ldots,f_n$ in $F[\un{X}]$, we denote by $V_F(f_1,\ldots,f_n)$ the {\itshape affine subvariety} of $\A^m_{\Bar{F}}$ of
equations $\{f_i(x_1,\ldots,x_m)=0,\,i=1,\ldots,n\}$. More formally, following \cite[Definition 3.4.7]{liu},
it is the  affine
scheme associated to the finitely generated $F$-algebra $\faktor{F[\un{X}]}{\langle f_1,\ldots,f_n\rangle}$. As,
by extension of scalars, the same set of elements gives rise to different varieties over different fields, to avoid any ambiguity, we use the word {\itshape $F$-variety} to specify the base field.
If the ideal $\langle f_1,\ldots,f_n\rangle$ is prime in $F[\un{X}]$, we say that the $F$-variety is {\itshape irreducible}.
Moreover, if the $F$-variety has codimension\footnote{We define the dimension of a variety as
the Krull dimension of the ring $\faktor{F[\un{X}]}{\langle f_1,\ldots,f_n\rangle}$.} $1$, we call it an $F$-{\itshape hypersurface}.
Finally we say that an irreducible $F$-variety is {\itshape separable} if $\Frac\left(\faktor{F[\un{X}]}{\langle f_1,\ldots,f_n\rangle}\right)$ is a separable extension of $F(\un{X})$, in the general sense as in \cite[Lemma 2.6.1]{fj}.\par
We state the first result of the article.
\begin{thm}\label{main1}
Let $K$ be a Hilbertian field, $\un{P}(\un{T},\un{Y})=\{P_1(\un{T},\un{Y}),\ldots,P_l(\un{T},\un{Y})\}$ a set of polynomials in $K[\un{T},\un{Y}]$ such that $V_T=V_{K(\un{T})}(\un{P}(\un{T},\un{Y}))$ is
a separable irreducible $K(\un{T})$-variety. Then for every $\un{t}=(t_1,\ldots,t_r)\in K^r$ in some Zariski-dense subset of $\A_K^r$,
the $K$-variety $V_t=V_K(\un{P}(\un{t},\un{Y}))\subset\A_{K}^s$, where $\un{P}(\un{t},\un{Y})$ is the set made of the specialized polynomials at $\un{t}$, is irreducible and its dimension $\dim_K V_t$ is equal to $\dim_{K(\un{T})} V_T$, the dimension of $V_T$ as a $K(\un{T})$-variety.
\end{thm}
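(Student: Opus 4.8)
The plan is to reduce the assertion to the classical Hilbert irreducibility theorem for hypersurfaces, by first putting $V_T$ into a ``generic hypersurface'' normal form, then spreading this form out over $\A_K^r$, and finally specializing $\un{T}$.

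\emph{Normalization.} Set $d=\dim_{K(\un{T})}V_T$ and let $\id{p}_T=\langle\un{P}(\un{T},\un{Y})\rangle\subseteq K(\un{T})[\un{Y}]$, which is prime by hypothesis; write $A_T=K(\un{T})[\un{Y}]/\id{p}_T$, a domain of Krull dimension $d$ whose fraction field $L_T$ is separable over $K(\un{T})$ of transcendence degree $d$. Since a Hilbertian field is infinite, a generic $K$-linear change of the variables $\un{Y}$ — via the separating form of Noether normalization, which applies because $V_T$ is separable — allows me to assume that the images $\bar{Y}_1,\dots,\bar{Y}_d$ in $A_T$ form a separating transcendence basis of $L_T/K(\un{T})$, and then, by the primitive element theorem over the infinite field $K_0:=K(\un{T})(\bar{Y}_1,\dots,\bar{Y}_d)$ (again with the coefficients taken in $K$), that $L_T=K_0(\bar{Y}_{d+1})$ while $\bar{Y}_{d+2},\dots,\bar{Y}_s\in K_0[\bar{Y}_{d+1}]$. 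Let $\mu(\un{T},U_1,\dots,U_d,Z)$ be a denominator-free multiple of the minimal polynomial of $\bar{Y}_{d+1}$ over $K_0$: it is irreducible in $K(\un{T})[\un{U},Z]$, separable in $Z$ with $\deg_Z\mu=\delta:=[L_T:K_0]$, and, since the fraction field of $K(\un{T})[\un{U},Z]/(\mu)$ is a finite separable extension of the purely transcendental extension $K(\un{T})(\un{U})$, it is separable in the array $(\un{U},Z)$ in the sense fixed in the introduction; fix also $g_{d+2},\dots,g_s\in K_0[Z]$ of degree $<\delta$ with $\bar{Y}_j=g_j(\bar{Y}_1,\dots,\bar{Y}_d;\bar{Y}_{d+1})$.

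\emph{Spreading out.} Put $\id{q}=\id{p}_T\cap K[\un{T},\un{Y}]$, a prime ideal with $\id{q}\cdot K(\un{T})[\un{Y}]=\id{p}_T$ and $\id{q}\cap K[\un{T}]=\{0\}$, so that $\Spec\bigl(K[\un{T},\un{Y}]/\id{q}\bigr)\to\A_K^r$ is dominant with $d$-dimensional generic fibre. Only finitely many elements of $K[\un{T}]$ occur as ``denominators'' in what precedes: those making $\id{q}$ and $\langle\un{P}\rangle$ agree after localization, those appearing in $\mu$, in the $g_j$, in the leading $Z$-coefficient and in the $Z$-discriminant of $\mu$, together with finitely more coming from upper semicontinuity of fibre dimension and from the generic geometric reducedness of the fibres — the latter available because $L_T/K(\un{T})$ is separable. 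Gathering these into a single nonzero $f\in K[\un{T}]$, I obtain that for every $\un{t}$ with $f(\un{t})\neq0$ the following hold: the specialized ideal $\id{p}_t=\langle\un{P}(\un{t},\un{Y})\rangle$ equals the fibre ideal $\id{q}+\langle T_i-t_i\rangle$ and is radical; $\mu(\un{t},\un{U},Z)$ is separable in $Z$ of degree $\delta$; $\dim_K V(\id{p}_t)=d$; and on the distinguished open locus where the specialized denominators are invertible, $\Spec K[\un{Y}]/\id{p}_t$ is isomorphic to a distinguished open of the hypersurface $V_K(\mu(\un{t},\un{U},Z))\subseteq\A_K^{d+1}$, through $U_i\mapsto\bar{Y}_i$, $Z\mapsto\bar{Y}_{d+1}$ and $Y_j\mapsto g_j$. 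This Bertini--Noether bookkeeping — ensuring that the generic fibre genuinely inherits the normal form of the first step — is the part I expect to require the most care.

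\emph{Specialization.} Applying the Hilbert irreducibility theorem over the Hilbertian field $K$ — in its form for separable irreducible polynomials in several variables — to $\mu\in K(\un{T})[\un{U},Z]$ yields a Hilbert subset of $\A_K^r$ over which $\mu(\un{t},\un{U},Z)$ is irreducible in $K[\un{U},Z]$; intersecting it with $\{f\neq0\}$ produces a Hilbert subset $H$, Zariski-dense in $\A_K^r$ since $K$ is Hilbertian. For $\un{t}\in H$ the polynomial $\mu(\un{t},\un{U},Z)$ is irreducible and separable in $Z$, so $V_K(\mu(\un{t},\un{U},Z))$ is an irreducible $K$-hypersurface of dimension $d$ in $\A_K^{d+1}$; by the isomorphism of the previous step, the dense open of $V(\id{p}_t)$ where the specialized denominators are units is irreducible of dimension $d$, hence so is $V(\id{p}_t)$ itself. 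As $\id{p}_t$ is moreover radical, it is prime; therefore $V_t$ is irreducible and $\dim_K V_t=d=\dim_{K(\un{T})}V_T$, which is the claim for the Zariski-dense set $H$.
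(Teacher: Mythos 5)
Your proposal follows essentially the same route as the paper's proof of Theorem~\ref{main11}: produce, via Noether normalization with a separating transcendence basis and the primitive element theorem, a single irreducible polynomial ($\mu$ for you, $p(\un{T},\un{z}(\un{T}),Y)$ in the paper) that captures the generic fibre birationally; control dimension and reducedness of the fibres over a dense open of $\A_K^r$; and then feed $\mu$ to the classical Hilbert irreducibility theorem. The differences are mostly cosmetic: you realize the normal form by a generic $K$-linear change of $\un{Y}$, the paper works with abstract generators $\un{z}(\un{T}),\un{\theta}(\un{T}),\alpha(\un{T})$; you invoke semicontinuity of fibre dimension and generic geometric reducedness, the paper invokes Grothendieck's generic flatness and deduces equidimensionality via base change; you conclude via a scheme-theoretic birational identification with the hypersurface $V_K(\mu(\un{t},\un{U},Z))$, the paper concludes by embedding $K[\un{Y}]/\id{p}_t$ into the field $K(\un{z}(\un{t}))[\alpha(\un{t})]$.

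The one place where you assert something you have not established is the claim that the distinguished open $D(h)\subset V(\id{p}_t)$ on which the specialized denominators are units is \emph{dense}. This is exactly where the argument hinges: a priori some irreducible component of $V(\id{p}_t)$ could lie entirely inside $V(h)$, in which case the irreducibility of $D(h)$ would say nothing about $V(\id{p}_t)$. Density here is equivalent to $h$ being a non-zero-divisor in $K[\un{Y}]/\id{p}_t$, and to get that you need to combine ingredients you already have, but did not assemble: $h$ specializes to a nonzero element of the polynomial subring $K[\bar{Y}_1(\un{t}),\ldots,\bar{Y}_d(\un{t})]$; $K[\un{Y}]/\id{p}_t$ is reduced, finite over that polynomial subring after specializing the Noether normalization, and \emph{equidimensional} of dimension $d$ (note you only stated $\dim_K V(\id{p}_t)=d$, which is weaker than equidimensionality — you need constancy of fibre dimension, not merely upper semicontinuity). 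Then every minimal prime of $K[\un{Y}]/\id{p}_t$ meets $K[\bar{Y}_1(\un{t}),\ldots,\bar{Y}_d(\un{t})]$ only in $0$, so $h$ is a non-zero-divisor and $D(h)$ is dense. The paper faces the analogous point — injectivity of $K[\un{Y}]/\id{p}_t\to K(\un{z}(\un{t}))[\alpha(\un{t})]$ — and also leaves it largely implicit, so this is a fillable gap rather than a wrong turn; but as written, \say{the dense open} is doing unexamined work.
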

\begin{osss}
   (a) Even if the polynomials in $\un{P}$ are irreducible, it is not enough that the corresponding
   specialized polynomials $P_i(\un{t},\un{Y})$ are
irreducible to conclude that the variety $V_K(\un{P}(\un{t},\un{Y}))$ is irreducible. Generally speaking,
it may be that an ideal is generated by irreducible polynomials in $K[\un{Y}]$ but is not a prime
ideal. Take for example the ideal $\langle Y-X,Y-X^2\rangle$, which contains the product
$X(X-1)$.\par\smallskip
    (b) Hilbert specialization in the case of polynomials can be also performed over rings instead of fields, as in \cite[Theorem 1.6 and Remark 4.4]{bdkn}. It is
    then natural to ask if we can extend Theorem \ref{main1} to a ring $R$ as well, at least when $R$ is a Unique Factorization Domain.
    As our proof and \cite{bdkn} suggest, it may be possible that such a version holds if a nonzero
    element $\phi\in R$ is inverted, i.e. if $R$ is replaced by $R[\phi^{-1}]$ (a restriction that cannot
    be avoided in general). This, however, remains unclear at the moment.\par\smallskip
(c) Theorem \ref{main1} can be seen as a Hilbertian version of Bertini's Theorem. If we look at
the statement of Bertini's Theorem given in \cite[Corollary 10.4.3]{fj}, we can see the similarity
between the two statements. However, the two statements go on parallel routes. Bertini's
Theorem demands an algebraically closed base field $K$. In particular the case $s=1$ is excluded in the Bertini context while it is a significant situation in the Hilbert context of Theorem \ref{main1}.
\end{osss}
A recurrent tool in the article will be the {\itshape generic} polynomial. Given an integer $D\geq 0$ we define the {\itshape generic}
polynomial of degree $D$:
\[\mathcal{Q}_D(\un{\Lambda},\un{Y})=\sum_{i=1}^{N_D} \Lambda_i\hspace{1pt} Q_i(\un{Y})\]
where $Q_i(\un{Y})$ varies over all the power products $Y_1^{\beta_1}\cdots Y_s^{\beta_s}$, $\beta_i\geq0$, in the variables $\un{Y}$ of degree smaller or equal than $D$ and $N_D$ is the number
of such power products and $\un{\Lambda}=(\Lambda_1,\ldots,\Lambda_{N_D})$ is a set of auxiliary variables, which correspond to the \say{generic} coefficients.\par
The application of Theorem \ref{main1}, with these variables $\un{\Lambda}$ playing the role of the variables $\un{T}$ in the statement of Theorem \ref{main1}, is the main tool of the proof of the next two statements.\par
The first one is about the intersection between an irreducible $K$-variety and a \say{generic} $K(\un{T})$-hypersurface.
\begin{thm}\label{main2}
Let $K$ be a Hilbertian field of characteristic $0$. For $l\geq1$, let $P_1(\un{Y}),\ldots,P_l(\un{Y})$ be polynomials
in $K[\un{Y}]$ such that $V=V_K(P_1,\ldots,P_l)$ is an irreducible $K$-variety of positive dimension $d$.
Then, for every $\un{\lambda}\in K^{N_D}$ in a Zariski-dense subset of $\A_K^{N_D}$, the $K$-variety $V\cap V_K(\mathcal{Q}_D(\un{\lambda},\un{Y}))$ is irreducible and $\dim_K V\cap V_K(\mathcal{Q}_D(\un{\lambda},\un{Y})))=d-1$.
\end{thm}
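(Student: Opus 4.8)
The plan is to apply Theorem \ref{main1} to the enlarged system
$\un{P}'=\{P_1(\un{Y}),\ldots,P_l(\un{Y}),\mathcal{Q}_D(\un{\Lambda},\un{Y})\}\subseteq K[\un{\Lambda},\un{Y}]$, with the auxiliary variables $\un{\Lambda}$ playing the role of $\un{T}$ (here one implicitly assumes $D\geq1$, the case $D=0$ being vacuous). Everything then reduces to verifying the hypothesis of Theorem \ref{main1} for $\un{P}'$, namely that $V_{K(\un{\Lambda})}(\un{P}')$ is a separable irreducible $K(\un{\Lambda})$-variety, and to computing that its dimension equals $d-1$; since $\mathrm{char}\,K=0$, separability is automatic, so only irreducibility and the dimension require work. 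Granting this, Theorem \ref{main1} furnishes a Zariski-dense subset of $\A_K^{N_D}$ over whose points $\un{\lambda}$ the $K$-variety $V_K(P_1,\ldots,P_l,\mathcal{Q}_D(\un{\lambda},\un{Y}))$ is irreducible of dimension $d-1$; and this $K$-variety is exactly $V\cap V_K(\mathcal{Q}_D(\un{\lambda},\un{Y}))$, because a scheme-theoretic intersection of closed subschemes corresponds to the sum of the defining ideals.

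To analyse $V_{K(\un{\Lambda})}(\un{P}')$, I would set $\id{p}=\langle P_1,\ldots,P_l\rangle$, prime in $K[\un{Y}]$, and $A=\faktor{K[\un{Y}]}{\id{p}}$, a domain with $\dim A=d$ whose fraction field $L$ satisfies $\trdeg_K L=d$; write $\overline{Q_i}\in A$ for the image of the monomial $Q_i(\un{Y})$. Consider the incidence variety $\Gamma=\Spec\big(\faktor{A[\un{\Lambda}]}{\langle\mathcal{Q}_D\rangle}\big)$, where $\mathcal{Q}_D=\sum_i\Lambda_i\overline{Q_i}$. Since $Q_1=1$, the coefficient of $\Lambda_1$ in $\mathcal{Q}_D$ is a unit of $A$, so eliminating $\Lambda_1$ yields an isomorphism $\faktor{A[\un{\Lambda}]}{\langle\mathcal{Q}_D\rangle}\cong A[\Lambda_2,\ldots,\Lambda_{N_D}]$ sending $\Lambda_1$ to $u:=-\sum_{i\geq2}\Lambda_i\overline{Q_i}$; hence $\Gamma$ is integral of dimension $d+N_D-1$ (geometrically, $\Gamma$ fibres over $V$ with fibres affine hyperplanes of $\A^{N_D}$).

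The crucial step, which I expect to be the main obstacle, is to show that the projection $\pi\colon\Gamma\to\A_K^{N_D}$ is dominant, equivalently that the map $K[\un{\Lambda}]\to A[\Lambda_2,\ldots,\Lambda_{N_D}]$ fixing $\Lambda_2,\ldots,\Lambda_{N_D}$ and sending $\Lambda_1\mapsto u$ is injective. If $f\in K[\un{\Lambda}]$ were a nonzero element of its kernel, then for every $K$-homomorphism $\psi\colon A\to\overline{K}$, i.e. every closed point of $V$, the linear form $\Lambda_1-\psi(u)$, with $\psi(u)=-\sum_{i\geq2}\psi(\overline{Q_i})\Lambda_i$, would divide $f$ in $\overline{K}[\un{\Lambda}]$. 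As $\dim V=d\geq1$, the variety $V$ has infinitely many closed points, and distinct $\psi$ produce distinct forms $\psi(u)$: if $\psi(u)=\psi'(u)$ then $\psi$ and $\psi'$ coincide on $\overline{Y_1},\ldots,\overline{Y_s}$ (this uses $D\geq1$) and hence are equal. So $f$ would be divisible by infinitely many pairwise non-associate irreducibles of the UFD $\overline{K}[\un{\Lambda}]$, which is absurd; thus $\pi$ is dominant. (Alternatively, one can show directly that $u$ is transcendental over $K(\Lambda_2,\ldots,\Lambda_{N_D})$ by differentiating with respect to a suitable $\Lambda_i$, using $\mathrm{char}\,K=0$ and the linear disjointness of $L$ and $K(\Lambda_2,\ldots,\Lambda_{N_D})$ over $K$.)

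Finally, I would invoke that the generic fibre of a dominant morphism of integral schemes is integral --- its coordinate ring being a localization of the domain $\mathcal{O}(\Gamma)$. Hence the generic fibre $\Gamma_\eta$ of $\pi$ over the generic point $\eta$ of $\A_K^{N_D}$ is integral, of dimension $\dim\Gamma-N_D=d-1$. Unwinding the base change gives $\mathcal{O}(\Gamma_\eta)=K(\un{\Lambda})[\un{Y}]/\big(\id{p}\,K(\un{\Lambda})[\un{Y}]+\langle\mathcal{Q}_D\rangle\big)$, that is, $\Gamma_\eta=V_{K(\un{\Lambda})}(\un{P}')$; so $V_{K(\un{\Lambda})}(\un{P}')$ is irreducible of dimension $d-1$, separable since $\mathrm{char}\,K=0$, and $d-1\geq0$ ensures its defining ideal is proper. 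Theorem \ref{main1} then applies and concludes the proof.
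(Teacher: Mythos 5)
Your proof is correct and follows the paper's overall strategy: apply Theorem~\ref{main1} to the enlarged ideal $\langle P_1,\ldots,P_l,\mathcal{Q}_D\rangle$ in $K[\un{\Lambda},\un{Y}]$, with $\un{\Lambda}$ playing the role of $\un{T}$, after showing this ideal is prime, meets $K[\un{\Lambda}]$ trivially, and has the right dimension; and both you and the paper use the same elimination of $\Lambda_1$ (the paper phrases it as the automorphism $\Lambda_1\mapsto\Lambda_1-\sum_{i\geq2}\Lambda_iQ_i-R$, you phrase it as the isomorphism $A[\un{\Lambda}]/\langle\mathcal{Q}_D\rangle\cong A[\Lambda_2,\ldots,\Lambda_{N_D}]$) to establish primality in $K[\un{\Lambda},\un{Y}]$ and to read off $\dim\Gamma$.

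Where you diverge is in the crucial \emph{dominance} step, i.e. proving $\langle\id{p},\mathcal{Q}_D\rangle\cap K[\un{\Lambda}]=\{0\}$. The paper (Lemma~\ref{ramero}, Steps~2--3) shows instead that $\mathcal{Q}_{S,R}$ is \emph{not invertible} in $B_\Lambda=K(\un{\Lambda})[\un{Y}]/\tilde{\id{p}}_\Lambda$, arguing by contradiction: from $N\mathcal{Q}_{S,R}=P(\un{\Lambda})$ it specializes the parameters $(\Lambda_2,\ldots,\Lambda_{|S|})$ at a suitable scalar tuple $\un{a}\in K^{|S|-1}$ avoiding the zero locus of $\phi_{S,R}$ and of a coefficient of $P$, and derives that a transcendental element of $B$ would be a root of a polynomial over $K$; since this requires enough scalars $\un{a}\in K^{|S|-1}$, the paper must treat finite $K$ separately by base-changing to $\overline{K}$. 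You instead specialize the $\un{Y}$-side: for each $K$-homomorphism $\psi\colon A\to\overline{K}$ you obtain a linear factor $\Lambda_1-\psi(u)$ of any putative $f$ in the kernel, and then exploit that $V$ has infinitely many $\overline{K}$-points producing pairwise non-associate such factors. Your argument is uniform across finite and infinite $K$ and is arguably more elementary, but it leans on $\mathcal{Q}_D$ being the genuine generic polynomial of degree $D\geq1$ (so that every $Y_j$ occurs among the monomials, giving injectivity of $\psi\mapsto\psi(u)$), whereas the paper's Lemma~\ref{ramero} is formulated for arbitrary \emph{quasi-generic} polynomials under the weaker hypothesis~(\ref{H}); that extra generality is what the paper reuses to prove Corollary~\ref{main21} and Theorem~\ref{main3}. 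Finally, for the dimension drop you invoke the transcendence-degree/dimension formula for a dominant morphism of integral finite-type $K$-schemes, while the paper invokes Krull's Height Theorem in $B_\Lambda$; these are essentially equivalent here. Your observation that $D\geq1$ must be tacitly assumed is accurate --- it corresponds to hypothesis~(\ref{H}) failing for $D=0$ --- and is worth making explicit.
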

We will see that a more general result, in fact, holds. If we replace the \say{generic} hypersurface by an intersection of
$\rho$ \say{generic} hypersurfaces, with $\rho\leq d$, the intersection with the original
variety $V$ will \say{often} be an irreducible $K$-variety of dimension $d-\rho$, see Corollary \ref{main21}.\par
The generic polynomial will be central also in the proof of the last main result of this paper, where
the goal is to generalize Theorem \ref{main1} to the
situation in which the variables are specialized at polynomials.\par
We note that the set $K[\un{Y}]_D$ of all the polynomials of degree smaller or equal than $D$ can
be endowed with a Zariski topology through the natural isomorphism with $\A^{N_D}_K$ which associates to
a polynomial $P(\un{Y})$ the point in $\A^{N_D}_K$ having the coefficients of $P$ as coordinates.
\begin{thm}\label{main3}
Let $K$ be a Hilbertian field of characteristic $0$. For $l\geq1$, let
$P_1(\un{T},\un{Y}),\ldots,P_l(\un{T},\un{Y})$ be polynomials
in $K[\un{T},\un{Y}]$ such that $V_T=V_{K(\un{T})}(P_1,\ldots,P_l)$ is an irreducible $K(\un{T})$-variety of dimension $d$.
Fix non-negative integers $D_1,\ldots,D_r$.
Then for every $\un{U}=\left(U_1(\un{Y}),\ldots,U_r(\un{Y})\right)$ in a Zariski-dense subset of $\prod_{i=1}^r K[\un{Y}]_{D_i}$,
the $K$-variety $V_U=V_K(P_1(\un{U},\un{Y}),\ldots,P_l(\un{U},\un{Y}))$ is an irreducible $K$-variety of
dimension $\dim_K V_U=d$.
\end{thm}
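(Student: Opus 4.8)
The plan is to deduce the statement from Theorem~\ref{main1} by absorbing the polynomial substitution into an ordinary scalar specialization over a larger base field. For each $i=1,\dots,r$ introduce a fresh block $\un{\Lambda}^{(i)}=(\Lambda^{(i)}_1,\dots,\Lambda^{(i)}_{N_{D_i}})$ of auxiliary variables, let $\un{\Lambda}$ be the concatenation of these blocks ($N:=\sum_{i=1}^r N_{D_i}$ variables in all), and set
\[
\widetilde{P}_j(\un{\Lambda},\un{Y}):=P_j\bigl(\mathcal{Q}_{D_1}(\un{\Lambda}^{(1)},\un{Y}),\dots,\mathcal{Q}_{D_r}(\un{\Lambda}^{(r)},\un{Y}),\un{Y}\bigr)\in K[\un{\Lambda},\un{Y}],\qquad j=1,\dots,l.
\]
Everything will rest on the claim that $\widetilde{V}:=V_{K(\un{\Lambda})}(\widetilde{P}_1,\dots,\widetilde{P}_l)$ is a separable irreducible $K(\un{\Lambda})$-variety with $\dim_{K(\un{\Lambda})}\widetilde{V}=d$ (separability being automatic in characteristic $0$). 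Granting it, Theorem~\ref{main1} applied with $\un{\Lambda}$ in the role of $\un{T}$ produces a Zariski-dense subset $H\subseteq\A^N_K$ such that $V_K\bigl(\widetilde{P}_1(\un{\lambda},\un{Y}),\dots,\widetilde{P}_l(\un{\lambda},\un{Y})\bigr)$ is irreducible of dimension $d$ for every $\un{\lambda}\in H$. Since the natural $K$-isomorphism $\A^N_K\cong\prod_{i=1}^r K[\un{Y}]_{D_i}$ carrying $\un{\lambda}^{(i)}$ to $U_i(\un{Y}):=\mathcal{Q}_{D_i}(\un{\lambda}^{(i)},\un{Y})$ is a homeomorphism, the image of $H$ is Zariski-dense in $\prod_i K[\un{Y}]_{D_i}$; and as $\widetilde{P}_j(\un{\lambda},\un{Y})=P_j(\un{U},\un{Y})$, this exhibits $V_U$ as irreducible of dimension $d$ for all such $\un{U}$, which is the assertion.

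To prove the claim about $\widetilde{V}$ I would straighten the substitution. Write $\un{\Lambda}=(\un{T},\un{\Lambda}')$, where $\un{T}=(\Lambda^{(1)}_1,\dots,\Lambda^{(r)}_1)$ gathers the constant coefficients of the generic polynomials and $\un{\Lambda}'$ gathers the remaining $M:=N-r$ variables, so that $\mathcal{Q}_{D_i}(\un{\Lambda}^{(i)},\un{Y})=T_i+\mathcal{Q}'_{D_i}(\un{\Lambda}',\un{Y})$ with $\mathcal{Q}'_{D_i}$ having no constant term in $\un{Y}$. The $K$-algebra automorphism $\sigma$ of $K[\un{T},\un{\Lambda}',\un{Y}]$ given by $T_i\mapsto T_i-\mathcal{Q}'_{D_i}(\un{\Lambda}',\un{Y})$ and fixing $\un{\Lambda}'$ and $\un{Y}$ sends $\widetilde{P}_j(\un{\Lambda},\un{Y})$ to $P_j(\un{T},\un{Y})$, hence induces an isomorphism
\[
K[\un{\Lambda},\un{Y}]/\langle\widetilde{P}_1,\dots,\widetilde{P}_l\rangle\ \xrightarrow{\ \sim\ }\ \bigl(K[\un{T},\un{Y}]/\langle P_1,\dots,P_l\rangle\bigr)[\un{\Lambda}']=:B[\un{\Lambda}'],
\]
under which $\widetilde{V}$ becomes $\Spec$ of the localization of $B[\un{\Lambda}']$ at the image of $K[\un{\Lambda}]\setminus\{0\}$. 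If $B$ is a domain, then $B[\un{\Lambda}']$ is a domain and so is every localization of it, whence $\widetilde{V}$ is irreducible; and since $\langle P_1,\dots,P_l\rangle\cap K[\un{T}]=0$ (the ideal being proper in $K(\un{T})[\un{Y}]$ because $V_T\neq\emptyset$), one has $K[\un{T}]\hookrightarrow B$, so $\dim B=r+\dim_{K(\un{T})}V_T=r+d$, hence $\trdeg_K\Frac(B[\un{\Lambda}'])=r+d+M=N+d$, and as the coordinate ring of $\widetilde{V}$ is a finitely generated domain over $K(\un{\Lambda})$ with that same fraction field, $\dim_{K(\un{\Lambda})}\widetilde{V}=(N+d)-N=d$. (That $\widetilde{V}\neq\emptyset$, equivalently that $K[\un{\Lambda}]\to B[\un{\Lambda}']$ is injective, is routine: freeness of the constant coefficients $T_i$ makes the morphism $V_K(P_1,\dots,P_l)\times\A^M_K\to\A^N_K$ defined by the substituted coordinates dominant.)

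The single delicate point — the one I expect to absorb essentially all the work — is therefore the primality of $\langle P_1,\dots,P_l\rangle$ in the \emph{full} ring $K[\un{T},\un{Y}]$. Irreducibility of $V_T$ only guarantees that this ideal becomes prime after inverting $K[\un{T}]\setminus\{0\}$, and that is genuinely not enough: for example $\langle T_1Y_1\rangle\subseteq K[T_1,Y_1]$ has irreducible generic fibre $V_{K(T_1)}(Y_1)$, yet substituting $T_1\mapsto\lambda_1+\lambda_2Y_1$ gives the reducible ideal $\langle Y_1(\lambda_2Y_1+\lambda_1)\rangle$. So the argument needs — and I would either read into the hypotheses or secure by a preliminary reduction — that $V_K(P_1,\dots,P_l)\subseteq\A^{r+s}_K$ be itself irreducible, i.e. that $\langle P_1,\dots,P_l\rangle$ be prime in $K[\un{T},\un{Y}]$. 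With that in hand, the straightening above, Theorem~\ref{main1}, and the dictionary $\A^N_K\cong\prod_i K[\un{Y}]_{D_i}$ complete the proof.
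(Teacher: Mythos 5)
Your argument is correct modulo the one point you flag, and the route is genuinely different from the paper's. The paper proves Theorem~\ref{main3} by a recursive application of Lemma~\ref{ramero} packaged as Theorem~\ref{ramero2}: it peels off one quasi-generic polynomial $\mathcal{Q}_{S_i,R_i}=\mathcal{U}_{D_i}-T_i$ at a time, at each stage passing through an auxiliary ring of fractions, checking hypothesis~(\ref{H}), invoking Krull's Height Theorem, and quotienting by $\langle\mathcal{Q}_{S_i,R_i}\rangle$ to replace $T_i$ by $\mathcal{U}_{D_i}$, before finally applying Theorem~\ref{main11}. Your automorphism $\sigma$ collapses that entire recursion into a single change of coordinates, exhibiting $K[\un{\Lambda},\un{Y}]/\langle\widetilde{P}\rangle$ directly as $B[\un{\Lambda}']$ and bypassing Lemma~\ref{ramero}, hypothesis~(\ref{H}), and Krull entirely. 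For Theorem~\ref{main3} specifically this is cleaner and more transparent; the paper's heavier machinery pays off instead in Theorem~\ref{main2} and Corollary~\ref{main21}, which need arbitrary quasi-generic polynomials and $\rho<r$ hypersurfaces.

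Your "delicate point" is real and it is a genuine issue in the paper, not just in your proposal. The paper's proof of Theorem~\ref{main3} opens by asserting that irreducibility of $V_T$ is "equivalently" the statement that $\id{p}=\langle P_1,\ldots,P_l\rangle$ is a prime ideal of $K[\un{T},\un{Y}]$ with $\id{p}\cap K[\un{T}]=\{0\}$ — and then feeds exactly that into Theorem~\ref{ramero2}, whose hypotheses require primality in the full ring. But this claimed equivalence only holds in the direction given by Lemma~\ref{fractions}; the converse is false, and your example $P=T_1Y_1$ (with $\langle T_1Y_1\rangle=\langle Y_1\rangle$ prime in $K(T_1)[Y_1]$ but $\langle T_1Y_1\rangle$ not prime in $K[T_1,Y_1]$) is not only a counterexample to that equivalence but an outright counterexample to Theorem~\ref{main3} as literally stated: substituting $T_1\mapsto\lambda_1+\lambda_2Y_1$ gives the reducible $\langle Y_1(\lambda_2Y_1+\lambda_1)\rangle$ on a dense open set of parameters. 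The correct hypothesis for Theorem~\ref{main3} (and for Theorem~\ref{main1} to actually follow from Theorem~\ref{main11} as asserted) is primality of $\langle P_1,\ldots,P_l\rangle$ in $K[\un{T},\un{Y}]$, precisely as you say; with that strengthening your proof is complete.
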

\begin{osss}\label{rem3}
(a) The case $l=r=1$ (i.e. one polynomial and one variable $T$) yields the Schinzel
hypothesis for the polynomial ring $K[\un{Y}]$, as stated in \cite[Section 1.1]{bdn}: given an irreducible polynomial $P(T,\un{Y})$
in $K[T,\un{Y}]$, for every $U(\un{Y})$ in some Zariski-dense subset of $K[\un{Y}]_D$, the polynomial
$P(U(\un{Y}),\un{Y})$ is irreducible in $K[\un{Y}]$.\par
(b) By taking $D_i=0$, for every $i$, Theorem \ref{main3} implies Theorem \ref{main1} in characteristic $0$
(see Remark \ref{3imp1}).
\end{osss}
\subsection{Hilbert sets}
In this introduction, we have restricted the choice of the base field to a Hilbertian field.
However, these statements can be generalized to every field thanks to the notion of {\itshape Hilbert
sets}. We are giving only a quick review on this topic; refer to \cite[Sections 12,13]{fj} for
more details.\par
Given a set of irreducible polynomials $P_1(\un{T},\un{Y}),\ldots,P_l(\un{T},\un{Y})$ in $K(\un{T})[\un{Y}]$, we define the following set:
\[H_K(P_1,\ldots,P_l)=\{\un{t}\in K^r|f_i(\un{t},\un{Y}) \text{ is irreducible in }K[\un{Y}]\text{ for
each }i=1,\ldots,l\}\text{.}\]\par
Such sets and their intersections with non-empty open Zariski-subsets are called {\itshape Hilbert subsets} of $K^r$. Alternatively, if we do not specify the dimension $r$,
we say that $H_K(P_1,\ldots,P_l)$ is a {\itshape Hilbert set} of $K$ to say that it is a Hilbert
subset of $K^r$ for some $r$. This definition does not require any hypothesis on the field.\par
These sets can be empty: take for example $K=\mathbb{C}$ and $P(T,Y)\in\mathbb{C}(T)[Y]$, an irreducible complex monic polynomial, such that $\deg_Y P>1$. \par
Therefore
we define the {\itshape Hilbertian fields}, which we have already mentioned, as the fields for which all
these sets are Zariski-dense in $\A_K^r$. Of course Hilbertian fields are the most convenient setting
because the sets of elements for which our results hold are generally as big as possible.\par
By definition, Hilbert sets are stable under finite intersection. 
We will show, in the next sections, that the Zariski-dense subsets involved in Theorems \ref{main1}, \ref{main2} and
\ref{main3} are, in fact, Hilbert sets, with no assumption on the field $K$. Consequently, our main
results hold, in fact, for a finite number of varieties/ideals.\par
In particular, the original Hilbert irreducibility property in its full form, i.e. for several polynomials
$P_1,\ldots,P_l$, follows from Theorem \ref{main1}: just apply it to each of the prime ideals $\langle P_i\rangle$ and then take the intersection of the Hilbert sets.\par
The paper is organized as follows. In Section 2 we will focus on Theorem \ref{main1}: we will see the two steps of its proof and some further remarks. Some more preliminary tools will be added
when convenient. In Section 3 we will first give some results about generic polynomials and
then, finally, we will use them to obtain the proofs of Theorems \ref{main2} and \ref{main3}.
\subsection*{Acknowledgements}
I would like to thank the Laboratoire Paul Painlevé for giving me the possibility to pursue a PhD.
Moreover I would like to thank Pierre Dèbes for his precious guidance and infinite patience. Finally, I
would like to thank Lorenzo Ramero for some useful suggestions concerning Lemma \ref{ramero}.

\section{Proof of Theorem \ref{main1}}
The cornerstone of the article is the proof of Theorem \ref{main1}, which will be
essential to prove Theorems \ref{main2} and \ref{main3}. The statement we are actually going to prove
hereinafter is a more general version of Theorem \ref{main1}.
\begin{thm}\label{main11}
Let $K$ be a field. Assume that
$\id{p}_T=\langle P_1(\un{T},\un{Y}),\ldots,P_l(\un{T},\un{Y})\rangle$ is a prime ideal in $K[\un{T},\un{Y}]$ such that $\id{p}_T\cap K[\un{T}]=\{0\}$ and $\Frac \left(\faktor{K[\un{T},\un{Y}]}{\id{p}_T}\right)$ is separable
over $K(\un{T})$. Denote by $d$ the dimension of $\faktor{K[\un{T},\un{Y}]}{\id{p}_T}$ as a $K[\un{T}]$-algebra.
Then for every $\un{t}$ in a Hilbert subset of $K^r$, the following equivalent statements hold:
\begin{enumerate}[label={\normalfont (\roman*)}]
    \item The quotient $\faktor{K[\un{Y}]}{\id{p}_t}$
is an integral algebra of dimension $d$ over $K$, where $\id{p}_t=\langle P_1(\un{t},\un{Y}),\ldots,P_l(\un{t},\un{Y})\rangle$ is the specialized ideal.
    \item The ideal $\id{p}_t$ is prime and its height $\height \id{p}_t$ is equal to the height $\height \id{p}_T$ of $\id{p}_T$.
    \item The $K$-variety $V_t=V(\un{P}(\un{t},\un{Y}))$, where $\un{P}(\un{t},\un{Y})$ is the set made of the specialized polynomials at $\un{t}$, is irreducible and its dimension $\dim_K V_t$ is equal to $\dim_{K(\un{T})} V_T$, the dimension of $V_T$ as a $K(\un{T})$-variety.
\end{enumerate}
\end{thm}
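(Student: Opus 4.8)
My plan is to reduce Theorem~\ref{main11} to the classical Hilbert irreducibility theorem for a single separable hypersurface (see \cite[Section 12.1]{fj}), by replacing $V_T$ with a birational hypersurface model. Write $R=K[\un T]$, $A_T=\faktor{K[\un T,\un Y]}{\id{p}_T}$, $B_T=A_T\otimes_R K(\un T)=\faktor{K(\un T)[\un Y]}{\id{p}_T\,K(\un T)[\un Y]}$ and $L=\Frac A_T=\Frac B_T$; the hypothesis $\id{p}_T\cap K[\un T]=\{0\}$ means $R\hookrightarrow A_T$, so $\dim A_T=\trdeg_K L=r+d$ with $d=\trdeg_{K(\un T)}L=\dim B_T$, and $L/K(\un T)$ is separable. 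For $\un t\in K^r$ set $\id{m}_{\un t}=\langle T_1-t_1,\dots,T_r-t_r\rangle$, so that $A_t:=A_T\otimes_R\faktor{R}{\id{m}_{\un t}}=\faktor{K[\un Y]}{\id{p}_t}$. The equivalence of (i), (ii), (iii) is a formal matter of dimension theory in catenary affine domains: $A_t$ is integral over $K$ exactly when $\id{p}_t$ is prime, exactly when $V_t$ is irreducible; $\dim_{K(\un T)}V_T=\dim B_T=d$; since $\dim A_T=r+d$ we get $\height\id{p}_T=(r+s)-(r+d)=s-d$; and when $\id{p}_t$ is prime, $\dim_K V_t=\dim A_t=s-\height\id{p}_t$. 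Hence it suffices to produce a Hilbert subset of $K^r$ on which $A_t$ is a domain of dimension $d$.

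The first step is the model. Since $L/K(\un T)$ is separably generated, Noether normalization in separable form together with the primitive element theorem --- after a $K(\un T)$-linear change of the variables $\un Y$, which is harmless because for $\un t$ outside a proper Zariski-closed subset it specializes to a $K$-linear automorphism of $K[\un Y]$ --- provide an irreducible polynomial $F\in K[\un T,U_1,\dots,U_{d+1}]$, primitive as a polynomial over $R$ (hence, by Gauss's lemma, irreducible in $K[\un T,\un U]$ and satisfying $R\hookrightarrow C:=\faktor{K[\un T,\un U]}{\langle F\rangle}$), such that $C_T:=C\otimes_R K(\un T)=\faktor{K(\un T)[\un U]}{\langle F\rangle}$ is a domain with $\Frac C_T\cong L$ over $K(\un T)$. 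In particular $V_{K(\un T)}(F)$ is a separable irreducible $K(\un T)$-hypersurface, $\dim C_T=d$, and $B_T$, $C_T$ are birational over $K(\un T)$. A routine localization argument (comparing a finite generating set of $C$ over $R$ with elements of $A_T$, and conversely) then yields a nonzero $g\in R$, nonzero elements $h\in A_T$ and $h'\in C$, and an isomorphism of $R[g^{-1}]$-algebras $A_T[g^{-1}][h^{-1}]\cong C[g^{-1}][h'^{-1}]$. Base-changing along the evaluation $R[g^{-1}]\to K$, $T_i\mapsto t_i$ --- legitimate whenever $g(\un t)\neq0$, since base change commutes with quotients and localizations --- gives, for such $\un t$, an isomorphism of $K$-algebras $A_t[\bar h^{-1}]\cong C_t[\bar{h'}^{-1}]$, where $C_t=\faktor{K[\un U]}{\langle F(\un t,\un U)\rangle}$ and $\bar h,\bar{h'}$ denote the images of $h,h'$.

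Now apply Hilbert irreducibility to the separable irreducible hypersurface $V_{K(\un T)}(F)$: by definition $H_0=H_K(F)$ is a Hilbert subset of $K^r$, and for $\un t\in H_0$ the polynomial $F(\un t,\un U)$ is irreducible in $K[\un U]$, so $C_t$ is a domain; off a further proper closed subset $F(\un t,\un U)\neq0$, whence $\dim C_t=d$; and off a dense open subset the affine domain $C[h'^{-1}]$, which still dominates $\Spec R$, has nonempty fibre over $\un t$, so $C_t[\bar{h'}^{-1}]$ is a nonzero localization of the domain $C_t$, i.e.\ a nonempty open subscheme of the irreducible $\Spec C_t$, hence a domain of dimension $d$. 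Through the isomorphism above, $A_t[\bar h^{-1}]$ is therefore a domain of dimension $d$; in particular $A_t\neq0$. To pass from $A_t[\bar h^{-1}]$ to $A_t$ itself I would argue as follows. For the dimension: since $R\hookrightarrow A_T$ with $A_T$ an affine domain of dimension $r+d$, every minimal prime over $\id{m}_{\un t}A_T$ has height $\leq r$, so every component of the fibre $\Spec A_t$ has dimension $\geq d$, while upper semicontinuity of fibre dimension confines the locus $\{\dim A_t\geq d+1\}$ to a proper closed subset of $\Spec R$; hence, off that subset, $\Spec A_t$ is equidimensional of dimension $d$. For irreducibility: as $A_t[\bar h^{-1}]$ is a domain, $A_t$ has exactly one minimal prime $\id{q}_0$ with $\bar h\notin\id{q}_0$, and the vanishing locus of any other minimal prime would lie in $\Spec A_t\cap\{h=0\}$, which --- by the same semicontinuity applied to $\Spec\faktor{A_T}{(h)}\to\Spec R$ --- has dimension $\leq d-1$ off a proper closed subset, contradicting equidimensionality. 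For reducedness: since $L/K(\un T)$ is separable, the generic fibre $B_T$ of $\Spec A_T\to\Spec R$ is geometrically reduced, so by the openness of the geometrically reduced locus in a finitely presented family the fibres are reduced over a dense open subset of $\Spec R$. Intersecting $H_0$ with these finitely many Zariski-open conditions yields a Hilbert subset of $K^r$ on which $A_t$ is reduced with a single minimal prime, i.e.\ a domain of dimension $d$; the equivalences of the first paragraph then conclude.

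The genuinely delicate point is the last passage, from $A_t[\bar h^{-1}]$ back to $A_t$: the localized ring is manifestly a domain of the expected dimension, but pinning down $A_t$ on the complement of $\{h\neq0\}$ --- excluding spurious lower-dimensional components and nilpotents --- is what forces the equidimensionality of the fibres of $\Spec A_T\to\Spec R$ and their generic geometric reducedness into the proof. Separability is used both there (for the reducedness) and in building the hypersurface model (for the separating transcendence basis and the primitive element), which is why in positive characteristic the hypothesis cannot be dropped; in characteristic $0$ both are automatic, matching the characteristic-$0$ applications in Theorems~\ref{main2} and~\ref{main3}.
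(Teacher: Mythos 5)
Your proposal follows the same overall route as the paper's proof: use separable Noether normalization plus the primitive element theorem to replace the $K(\un T)$-variety by a birational hypersurface model, apply classical Hilbert irreducibility to the defining polynomial of that hypersurface, and then transfer integrality back to $A_t = K[\un Y]/\id p_t$. The technical execution differs, however, in the two places you flag. For the dimension statement, the paper invokes Grothendieck's generic flatness to make $\Spec A_T[c^{-1}] \to \Spec K[\un T][c^{-1}]$ flat, whence equidimensionality of fibres; you instead combine the Krull height bound (plus catenarity of $A_T$) for the lower bound with Chevalley semicontinuity of fibre dimension for the upper bound --- both are legitimate. For irreducibility, the paper does more careful bookkeeping upfront: it chooses $\alpha$ integral over $R_T[\un z]$, introduces the denominators $\delta_i$ and the discriminant $d(\un T)$, and proves $d(\un T)\theta_i \in R_T[\un z][\alpha]$, so that after Hilbert specialization of the minimal polynomial the whole ring $A_t = K[\un z(\un t)][\un\theta(\un t)]$ is a subring of the field $K(\un z(\un t))[\alpha(\un t)]$; there is no residual localization to strip away. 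You instead obtain an isomorphism only after inverting $h$ and $h'$, and then must recover $A_t$ from $A_t[\bar h^{-1}]$, which forces you to invoke equidimensionality to exclude spurious minimal primes and the constructibility of the geometrically-reduced locus in a finitely presented family (together with separability of $\Frac A_T / K(\un T)$) to exclude nilpotents. That reasoning is correct, but it imports substantially more machinery (Chevalley constructibility, EGA IV 9.7.7) than the paper's careful integrality tracking, which buys a cleaner endgame. One small imprecision: ``openness of the geometrically reduced locus'' holds as stated only for flat families; what you actually need and use is constructibility of the locus, which together with the generic fibre being geometrically reduced gives the dense open subset --- or you could first invoke generic flatness, as the paper does, after which openness does apply.
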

\begin{oss}
Consider the extreme case for which the ideal $\id{p}_T$ in Theorem \ref{main11} is
maximal as an ideal in $K(\un{T})[\un{Y}]$. Then the quotient $\faktor{K(\un{T})[\un{Y}]}{\id{p}_T}$ is
an algebraic separable field extension of $K(\un{T})$ of finite degree. In this case, Theorem \ref{main11} implies
the well-known fact that the degree of the extension is preserved under specialization at every $\un{t}$
in a Hilbert set of $K$. Moreover, if the extension is Galois, then also the Galois group of the
extension is preserved. The study of the specialization of Galois extensions is central in Inverse
Galois Theory, see for example \cite{volklein,fj}.
\end{oss}
The equivalence between the three statements is easy. The proof is given right after the following lemma, which we will frequently use in the paper.
\begin{lem}\label{fractions}
Let $K$ be a field. Then, given a prime ideal \[\id{p}=\langle P_1(\un{T},\un{Y}),\ldots,P_l(\un{T},\un{Y})\rangle\subset K[\un{T},\un{Y}]\]
such that $\id{p}\cap K[\un{T}]=\{0\}$, the ideal
\[\tilde{\id{p}}=\langle P_1(\un{T},\un{Y}),\ldots,P_l(\un{T},\un{Y})\rangle\subset K(\un{T})[\un{Y}]\]
is prime and its height $\height\tilde{\id{p}}$ is equal to the height $\height\id{p}$ of $\id{p}$.
\end{lem}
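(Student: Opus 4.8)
The plan is to realize the passage from $K[\un{T},\un{Y}]$ to $K(\un{T})[\un{Y}]$ as a localization and then invoke the standard behaviour of prime ideals and heights under localization. Set $S=K[\un{T}]\setminus\{0\}$, a multiplicative subset of $R:=K[\un{T},\un{Y}]$. Since inverting the nonzero elements of $K[\un{T}]$ turns $K[\un{T}]$ into $K(\un{T})$ and leaves the variables $\un{Y}$ untouched, we have a canonical identification $S^{-1}R=K(\un{T})[\un{Y}]$. Under this identification $\tilde{\id{p}}$ is exactly the extension $S^{-1}\id{p}$ of $\id{p}$, because the extension of an ideal to a localization is generated by the images of any chosen set of generators, here $P_1,\ldots,P_l$.

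Next I would observe that the hypothesis $\id{p}\cap K[\un{T}]=\{0\}$ is precisely the statement that $\id{p}\cap S=\emptyset$. The localization correspondence then gives that $S^{-1}\id{p}$ is a prime ideal of $S^{-1}R$ whose contraction to $R$ is again $\id{p}$; in particular $\tilde{\id{p}}$ is prime, which is the first assertion.

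For the height, the key point is that any prime $\id{q}\subseteq\id{p}$ automatically satisfies $\id{q}\cap S\subseteq\id{p}\cap S=\emptyset$, so it too survives in the localization. Hence $\id{q}\mapsto S^{-1}\id{q}$ is an inclusion-preserving bijection from the set of primes of $R$ contained in $\id{p}$ onto the set of primes of $S^{-1}R$ contained in $\tilde{\id{p}}$, with inverse given by contraction. Chains of primes descending from $\id{p}$ therefore correspond, bijectively and length-preservingly, to chains descending from $\tilde{\id{p}}$, so the supremum of their lengths — that is, the height — agrees on both sides, giving $\height\tilde{\id{p}}=\height\id{p}$.

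I do not expect a real obstacle here; the only things to be careful about are recording that the extended ideal $S^{-1}\id{p}$ coincides with the ideal generated by the same polynomials in $K(\un{T})[\un{Y}]$, and that the chain correspondence used for the height is genuinely a bijection — which in turn rests on the fact that all the relevant primes lie under $\id{p}$ and are hence disjoint from $S$.
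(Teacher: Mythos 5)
Your proposal is correct and follows essentially the same route as the paper: realize $K(\un{T})[\un{Y}]$ as the localization $S^{-1}K[\un{T},\un{Y}]$ with $S=K[\un{T}]\setminus\{0\}$, use the bijective correspondence of primes disjoint from $S$ to get primeness, and use the same correspondence on chains of primes below $\id{p}$ (all of which are automatically disjoint from $S$) to get equality of heights. The only cosmetic difference is that you invoke the chain correspondence as a single inclusion-preserving bijection, whereas the paper argues the two inequalities $\height\id{p}\le\height\tilde{\id{p}}$ and $\height\tilde{\id{p}}\le\height\id{p}$ separately; the content is the same.
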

\begin{proof}
Denoting
$S=K[\un{T}]\setminus \{0\}$, we remark that $S$ is a multiplicative subset and $S^{-1}K[\un{T},\un{Y}]=K(\un{T})[\un{Y}]$.
The natural morphism sending an element $a$ in $K[\un{T},\un{Y}]$ to $\frac{a}{1}$ in $S^{-1}K[\un{T},\un{Y}]$ induces a bijective correspondence between prime ideals in $K[\un{T},\un{Y}]$ having empty intersection with $S$ and prime ideals in $K(\un{T})[\un{Y}]$ \cite[Proposition 3.11(iv)]{am}.
So we can consider the prime ideal $\tilde{\id{p}}$ associated to $\id{p}$ by this correspondence, which
is the ideal generated by the image of $\id{p}$ under the aforementioned morphism: the ideal
\[\tilde{\id{p}}=\langle P_1(\un{T},\un{Y}),\ldots,P_l(\un{T},\un{Y})\rangle\subset K(\un{T})[\un{Y}]\] is prime.\par
Now we want to check that the height is preserved by this extension. Consider a maximal chain of primes in $\id{p}$ in $K[\un{T},\un{Y}]$,
\[\id{p}_1\subset\id{p}_2\subset\ldots\subset\id{p}\text{.}\]\par
As $\id{p}_i\subset\id{p}$ for each $i$, we have $\id{p}_i\cap S=\emptyset$, so $\tilde{p}_i$ is prime in
$K(\un{T})[\un{Y}]$. Since the inclusions are
conserved for $\tilde{\id{p}}_i$, we have $\height\id{p}\leq \height\tilde{\id{p}}$. Vice versa,
assume that a chain of primes $\tilde{\id{p}}_i$ inside $\tilde{\id{p}}$ is longer than $\height\id{p}$: by
the previous correspondence we can build a chain of ideals inside $\id{p}$ longer than
$\height\id{p}$, which is a contradiction. So $\height\id{p}=\height\tilde{\id{p}}$.
\end{proof}

We can now give the proof of the equivalence between the three statements of Theorem \ref{main11}.
\begin{proof}
(i)$\Leftrightarrow $(ii) It easily follows from the fact that the height of the ideal is equal to the codimension of the quotient algebra by the ideal.
\par
(ii)$\Leftrightarrow$(iii) By Lemma \ref{fractions}, $\height\id{p}_T=\height\tilde{\id{p}}_T$. By statement (ii), $\height\id{p}_t=\height\id{p}_T$. So $\height\id{p}_t=\height\tilde{\id{p}}_T$.
As the height of a prime ideal is the codimension of the associated variety and the rings
$K(\un{T})[\un{Y}]$ and $K[\un{Y}]$ have the same Krull dimension, statement (iii) follows. The converse
is easily shown in the same manner.
\end{proof}
There are two requirements for statement (i) of Theorem \ref{main11}: we
want $\faktor{K[\un{T},\un{Y}]}{\id{p}_T}$ to be integral and of the correct dimension. We
are going to prove the two parts separately: first we show that each integral component of $\faktor{K[\un{Y}]}{\id{p}_t}$ is of dimension $d$, then that there is only one such component.
\subsection{First part}\label{first}
This part is mostly geometric. We are quickly recalling some tools that we are going to use. The
statements are taken from \cite{groth}.
\begin{lem}[Local freeness, Lemma 5.11]\label{freeness}
Let $A$ be a Noetherian domain and $B$ a finite-type $A$-algebra. Let $M$ be a finite $B$-module.
Then there exists $c\in A$, $c\neq 0$ such that the localisation $M\left[c^{-1}\right]$ is a free
module over $A\left[c^{-1}\right]$.
\end{lem}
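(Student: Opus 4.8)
This is the classical \emph{generic freeness} lemma of Grothendieck, and the plan is to prove it by the standard dévissage argument. First I would reduce to the case $M=B$ with $B$ a domain finitely generated over $A$. A finite $B$-module $M$ admits a prime filtration $0=M_0\subset M_1\subset\cdots\subset M_k=M$ with $M_i/M_{i-1}\cong\faktor{B}{\id{q}_i}$ for primes $\id{q}_i\subset B$; since an extension of finite free modules over any ring is free (the quotient being projective, the sequence splits), it suffices to prove the statement for each $\faktor{B}{\id{q}_i}$ and then invert the product of the constants obtained. Replacing $B$ by such a quotient, we may assume $B$ is a domain. If moreover the structure map $A\to B$ has a nonzero element $c$ in its kernel, then $c$ is simultaneously zero and a unit in $B[c^{-1}]=B\otimes_A A[c^{-1}]$, so $B[c^{-1}]=0$ is free; hence we may also assume $A\subseteq B$.

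The next step is Noether normalization over the generic point together with spreading out. Set $K=\Frac(A)$ and $d'=\dim(B\otimes_A K)$, and pick $y_1,\dots,y_{d'}\in B\otimes_A K$, algebraically independent over $K$, with $B\otimes_A K$ finite over $K[\un{y}]$; after multiplying by elements of $A$ we may take $y_j\in B$. Writing $B=A[b_1,\dots,b_m]$ and clearing the denominators occurring in finitely many integral dependence relations of the $b_i$ over $K[\un{y}]$, we obtain $0\neq c_1\in A$ such that $B[c_1^{-1}]$ is a finite module over the polynomial ring $C:=A[c_1^{-1}][\un{y}]$, and clearly $C\subseteq B[c_1^{-1}]$. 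Replacing $A$ by $A[c_1^{-1}]$, we have reduced to the situation where $B$ is a domain which is finite and torsion-free as a module over a polynomial ring $C=A[y_1,\dots,y_{d'}]$.

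Now I would induct on $d'$. Choose $e_1,\dots,e_n\in B$ mapping to a $\Frac(C)$-basis of the finite-dimensional space $B\otimes_C\Frac(C)$; torsion-freeness makes the induced map $C^n\to B$ injective, and its cokernel $N$ is a finite $C$-module with $N\otimes_C\Frac(C)=0$, hence annihilated by a nonzero ideal $\id{a}\subseteq C$, so $N$ is a finite module over the finite-type $A$-algebra $\faktor{C}{\id{a}}$. If $d'=0$ then $C=A$ and $N$ is a finite torsion $A$-module, killed by some $0\neq c_2\in A$, so $B[c_2^{-1}]\cong A[c_2^{-1}]^n$ is free. If $d'\geq1$, note that $\id{a}$ becomes a nonzero ideal of the $d'$-dimensional domain $K[\un{y}]$ after $-\otimes_A K$, so $\dim\left(\faktor{C}{\id{a}}\otimes_A K\right)<d'$; feeding $N$, as a module over $\faktor{C}{\id{a}}$, back through the first two steps, the inductive hypothesis produces $0\neq c_2\in A$ with $N[c_2^{-1}]$ free over $A[c_2^{-1}]$, and then the short exact sequence $0\to A[c_2^{-1}]^n\to B[c_2^{-1}]\to N[c_2^{-1}]\to0$ splits, so $B[c_2^{-1}]$ is free; one takes $c=c_1c_2$.

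The delicate point is organising this last induction: the torsion cokernel $N$ a priori lives only over the polynomial ring $C=A[\un{y}]$, not over $A$ itself, so it must re-enter the argument as a module over the strictly smaller $A$-algebra $\faktor{C}{\id{a}}$, and it is precisely the drop in generic fibre dimension forced by the nonzero annihilator $\id{a}$ that makes the recursion bottom out. Everything else is routine localisation bookkeeping.
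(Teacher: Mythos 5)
The paper does not actually give a proof of this lemma: it is quoted as ``Lemma 5.11'' from the cited reference on Grothendieck's generic flatness and is used as a black box, so there is no in-paper argument to compare yours against. Your d\'evissage is the standard proof of Grothendieck's generic freeness and it is correct. The three reductions are organized properly: the prime filtration reduces to $M=\faktor{B}{\id{q}}$, and since an extension of free modules splits (the quotient is projective) one may invert the product of the finitely many constants produced; Noether normalization over $\Frac(A)$ followed by clearing denominators makes $B$ a finite torsion-free module over a polynomial ring $C=A[c_1^{-1}][\un{y}]$; and the exact sequence $0\to C^{n}\to B\to N\to 0$, with $N$ a finite module over $\faktor{C}{\id{a}}$ of strictly smaller generic fibre dimension, drives a well-founded induction on $d'=\dim(B\otimes_A\Frac(A))$. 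You also identify and correctly handle the one genuinely delicate point, namely that $N$ does not live over $A$ but over the finite-type $A$-algebra $\faktor{C}{\id{a}}$, so the recursion must re-enter at the first reduction with the drop in $d'$ (forced by $\id{a}\neq 0$) guaranteeing termination; the base case $d'=0$, with $N$ killed by a single nonzero element of $A$, is also treated correctly. I see no gaps.
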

This result, due to Grothendieck, has a deep consequence, the so-called {\itshape Generic flatness}.
\begin{thm}[Generic flatness, Theorem 5.12]\label{flatness}
Let $S$ be a Noetherian and integral scheme. Let $p:X\to S$ be a finite type morphism and let $\mathcal{F}$
be a coherent sheaf of $\O{X}$-modules. Then there exists a non-empty open subscheme $U\subset S$
such that the restriction of $\mathcal{F}$ to $X_U=p^{-1}(U)$ is flat over $\O{U}$.
\end{thm}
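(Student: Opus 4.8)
The plan is to deduce generic flatness from the local freeness statement of Lemma \ref{freeness} by a double reduction to the affine case, using that $S$ is Noetherian (hence quasi-compact, so only finitely many charts are needed) and integral (hence irreducible, so non-empty opens are dense). First I would reduce to the case $S=\Spec A$ with $A$ a Noetherian domain. Flatness of $\mathcal{F}|_{X_U}$ over $\O{U}$ is a condition on stalks, so it is local on $S$; since $S$ is quasi-compact it is covered by finitely many affine open subschemes $S_1,\ldots,S_n$, each of the form $\Spec A_i$ with $A_i$ a Noetherian domain, because the ring of sections of an integral scheme on any non-empty open is a domain. If for each $i$ one produces a non-empty open $U_i\subseteq S_i$ over which the restriction of $\mathcal{F}$ is flat, then $U=\bigcap_i U_i$ works: each $U_i$ is open in $S$, a non-empty open of the irreducible scheme $S$ is dense, a finite intersection of dense opens of an irreducible space is a non-empty open, and flatness at a stalk over a point $u\in U$ follows from flatness over whichever $U_i$ contains $u$. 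So from now on $S=\Spec A$ with $A$ a Noetherian domain.

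Next I would reduce to $X$ affine. As $p$ is of finite type and $A$ is Noetherian, $X$ is a Noetherian scheme, hence quasi-compact, so it is covered by finitely many affine opens $X_j=\Spec B_j$ with each $B_j$ a finitely generated $A$-algebra. Since $\mathcal{F}$ is coherent, $\mathcal{F}|_{X_j}$ corresponds to a finite $B_j$-module $M_j$. Applying Lemma \ref{freeness} to the Noetherian domain $A$, the finite-type algebra $B_j$ and the finite module $M_j$, one obtains an element $c_j\in A$, $c_j\neq 0$, such that $M_j[c_j^{-1}]$ is free over $A[c_j^{-1}]$. Then I would put $c=\prod_{j} c_j$, which is non-zero because $A$ is a domain, and take $U=\Spec A[c^{-1}]$. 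For each $j$ the module $M_j[c^{-1}]=M_j[c_j^{-1}]\otimes_{A[c_j^{-1}]}A[c^{-1}]$ is a base change of a free module, hence free, hence flat, over $A[c^{-1}]$; the opens $X_j\cap p^{-1}(U)=\Spec B_j[c^{-1}]$ cover $X_U=p^{-1}(U)$, so $\mathcal{F}|_{X_U}$ is flat over $\O{U}$, and $U$ is non-empty. This completes the argument.

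The hard part is really only the patching in the first reduction: one has to make sure that the opens $U_i$, a priori produced inside different affine charts $S_i$, can be intersected into a single non-empty open of $S$, and this is exactly where the integrality (irreducibility) of $S$ enters — on a merely reduced Noetherian scheme one would still get a dense open, which is all that is needed, but the clean single-element description $D(c)$ requires the domain hypothesis on each chart. Everything else is the routine bookkeeping that ``free implies flat'' and that freeness and flatness are preserved under base change, together with the finiteness of all covers guaranteed by the Noetherian assumption.
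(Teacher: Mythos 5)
The paper does not prove this theorem: it is quoted from \cite{groth}, and the surrounding text explicitly declines to go into the details, offering only the remark that when $S=\Spec A$ for a domain $A$ one may take $U=\Spec A[c^{-1}]$ with $c$ coming from Lemma~\ref{freeness}. So there is no proof in the paper to compare with; what you have supplied is the standard deduction of generic flatness from local freeness, and it is correct as written.

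One structural simplification is worth pointing out. Your first reduction --- covering $S$ by affine charts $S_i$, producing a non-empty open $U_i$ in each, and intersecting --- is avoidable. The theorem only asks for \emph{some} non-empty open $U\subset S$; it need not meet several charts. Since $S$ is irreducible, you may simply fix a single non-empty affine open $\Spec A\subseteq S$ (with $A$ a Noetherian domain because $S$ is integral and Noetherian) and replace $X$ by $p^{-1}(\Spec A)$ and $\mathcal{F}$ by its restriction. Your second reduction then produces $U=\Spec A[c^{-1}]\subseteq\Spec A$, which is already a non-empty open of $S$, and the flatness of $\mathcal{F}|_{p^{-1}(U)}$ over $\O{U}$ is exactly the desired conclusion. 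In particular your closing remark slightly misattributes the role of integrality: its essential use is to guarantee that $c=\prod_j c_j$ is nonzero (and that $A$ is a domain so Lemma~\ref{freeness} applies), not to make a cross-chart patching argument go through --- that patching can be dispensed with altogether. With this streamlining the argument becomes exactly what the paper's remark after the theorem alludes to.
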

We do not want to go into the details of this statement, as it falls outside of the aim of this paper.
The only things we need to know is, first, that if a ring $A$ is Noetherian, then the structural sheaf
of $\Spec A$ is coherent as a sheaf of modules over itself \cite[5.2.1]{hartshorne}. Moreover, if
$S$ is an affine integral scheme, i.e. $S=\Spec A$ for some domain $A$, then the open subscheme $U$ in Theorem \ref{flatness} is, indeed,
$\Spec A\left[c^{-1}\right]$ for some $c$ coming from local freeness.\par
Now we can begin the actual proof of Theorem \ref{main11}.\par
Here is a diagram including all the maps
involved, so to give also the necessary notation:
\begin{equation*} \xymatrix{
\faktor{K[\un{T},\un{Y}]}{\id{p}_T} \ar[r]^-{\spec_t} & \faktor{K[\un{Y}]}{\id{p}_t} \\
K[\un{T}] \ar@{^{(}->}[u]^-{i_T} \ar[r]^-{\spec_t} & K \ar@{^{(}->}[u]^-{i_t}
}.
\end{equation*}
Here $\spec_t$ is the specialization map at the fixed point $\un{t}\in K^r$. We will show that
both maps $i_\bullet$
are injective.\par
As, by assumption, $\id{p}_T\cap K[\un{T}]=\{0\}$, we have that $i_T$ is an injection.\par
For $i_t$ to be well defined and injective, we show that $\id{p}_t\cap K=\{0\}$, which is
equivalent to showing that
$\id{p}_t\neq K[\un{Y}]$. Consider the ideal $\tilde{\id{p}}_{T}$, which, by Lemma \ref{fractions},
satisfies $\tilde{\id{p}}_{T}\subsetneqq K(\un{T})[\un{Y}]$. By Weak Nullstellensatz \cite[Proposition 9.4.1]{fj},
if $1\notin \tilde{\id{p}}_T\subset K(\un{T})[\un{Y}]$, then there exists  \[\un{x}(\un{T})=(x_1(\un{T}),\ldots,x_s(\un{T}))\in \overline{K(\un{T})}^s\]
such that
\[P_i(\un{T},\un{x}(\un{T}))=0\quad\forall i=1,\ldots,l\text{.}\]\par
For every $\un{t}$ outside of a proper Zariski-closed set $C$ of values, we can extend the morphism of specialization $\spec_t$ to the
$x_i$'s (e.g. \cite[Lemma 1.7.3]{debes}). Then, denoting $\un{x}(\un{t})=(\spec_t(x_1(\un{T})),\ldots,\spec_t(x_s(\un{T})))\in\overline{K}^s$, we have
that
\begin{equation}\label{spec}
P_i(\un{t},\un{x}(\un{t}))=0\quad\forall i=1,\ldots,l    
\end{equation}
which implies that $1\notin\id{p}_t$, so $\id{p}_t\neq K[\un{Y}]$.\par
The above diagram of ring morphisms induces a diagram of scheme morphisms on the spectra of the rings
\[ \xymatrix{
\Spec\left(\faktor{K[\un{T},\un{Y}]}{\id{p}_T}\right) \ar[d]_-{i_T^*}
& \Spec\left(\faktor{K[\un{Y}]}{\id{p}_t}\right) \ar[l]_-{\spec_t^*} \ar[d]_-{i_t^*} \\
\Spec K[\un{T}]   & \Spec K \ar[l]_-{\spec_t^*}
}
\]\par
We look at the map $i_T^*$. 
\begin{itemize}
    \item As $K[\un{T}]$ is a Noetherian domain, $\Spec K[\un{T}]$ is
a Noetherian and integral scheme;
    \item As $\faktor{K[\un{T},\un{Y}]}{\id{p}_T}$ is an algebra of
finite type over $K[\un{T}]$, $i_T^*$ is a morphism of finite type;
    \item Let $\mathcal{F}$ be the structural sheaf of $\Spec K[\un{T}]$. Then $\mathcal{F}$ is coherent on itself.
\end{itemize}

Then we can apply Generic Flatness (Theorem \ref{flatness}): there exists $c(\un{T})\in K[\un{T}]$ such that the following restriction of $i_T^*$
\[i_T^*:\Spec\left(\faktor{K[\un{T},\un{Y}]}{\id{p}_T}\left[c(\un{T})^{-1}\right]\right) \to\Spec \left(K[\un{T}]\left[c(\un{T})^{-1}\right]\right)
\]
is flat. This implies, by \cite[Proposition 9.5, Corollary 9.6]{hartshorne}, that every irreducible
component of $\Spec\left(\faktor{K[\un{T},\un{Y}]}{\id{p}_T}\left[c(\un{T})^{-1}\right]\right)$ has dimension $d$.\par
This yields the following restriction of the initial diagram for every $t\in K^r$ such that $c(\un{T})\neq0$ and $t\notin C$:
\[ \xymatrix{
\Spec\left(\faktor{K[\un{T},\un{Y}]}{\id{p}_T}\left[c(\un{T})^{-1}\right]\right) \ar[d]_-{i_T^*}
& \Spec\left(\faktor{K[\un{Y}]}{\id{p}_t}\right) \ar[l]_-{\spec_t^*} \ar[d]_-{i_t^*} \\
\Spec\left( K[\un{T}]\left[c(\un{T})^{-1}\right]\right)   & \Spec K \ar[l]_-{\spec_t^*}
}
\]
As the dimension of the fiber at a point is preserved by base change, we can conclude that every
irreducible component of $\Spec\faktor{K[\un{Y}]}{\id{p}_t}$ has
dimension $d$ for
every value of $\un{t}\in K^r$ such that $c(\un{t})\neq0$ and $\un{t}\notin C$,
i.e. for every value of $\un{t}\in K^r$ outside of two proper Zariski-closed sets, whose union
is still a Zariski-closed set. Denote this set by $C_1$.
\subsection{Second part}\label{second}
The second stage of the proof is to find $\un{t}$ in $K^r\setminus C_1$ such that the specialized quotient $\faktor{K[\un{Y}]}{\id{p}_t}$ is integral.
This part has a more algebraic approach and relies on the Noether Normalization Lemma. We are stating
below a complete version of this result, coming from the merge of the statements in \cite{hochster} and
\cite[Corollary 13.18]{eisenbud}. It is readily checked that the two proofs can also be merged to yield
the following statement.
\begin{lem}[Noether Normalization Lemma]\label{nn}
Let $A$ be an algebra of finite type of dimension $d$ over a domain $R$. Then there exist a nonzero
element $c\in R$ and elements $z_1,\ldots,z_d$ in $A\left[c^{-1}\right]$, algebraically independent over
$R\left[c^{-1}\right]$, such that $A\left[c^{-1}\right]$ is a module of finite type over its subring
$R\left[c^{-1}][\un{z}\right]\coloneqq R\left[c^{-1}\right][z_1,\ldots,z_d]$.\\
Moreover, set $F=\Frac R$ and $L=\Frac A$. If $L$ is separable over $F$, then $\un{z}$ can be chosen so
to be a separating transcendence basis
of the extension.
\end{lem}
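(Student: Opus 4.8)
The plan is to reduce Lemma \ref{nn} to the classical Noether Normalization Lemma over a \emph{field}: I would apply the field version to the base change $A\otimes_R F$, where $F=\Frac R$, and then descend the data obtained back to $R[c^{-1}]$ for a suitable nonzero $c\in R$ by clearing denominators.

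First I would set $S=R\setminus\{0\}$ and $A_F=S^{-1}A=A\otimes_R F$, a finite-type $F$-algebra of dimension $d$; when $A$ is a domain so is $A_F$, and then $\Frac A_F=\Frac A=L$ since $S\subseteq A\setminus\{0\}$. The field version of Noether normalization — the statement obtained by merging \cite{hochster} and \cite[Corollary 13.18]{eisenbud} over the base field $F$ — then provides $w_1,\ldots,w_d\in A_F$, algebraically independent over $F$, such that $A_F$ is a module of finite type over $F[w_1,\ldots,w_d]$; and if $L/F$ is separable the $w_i$ can be taken to form a separating transcendence basis of $L/F$.

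Next I would clear denominators. Writing $w_i=a_i/s_i$ with $a_i\in A$ and $s_i\in S$, and replacing $w_i$ by the nonzero $F$-scalar multiple $s_iw_i=a_i$ — which alters neither algebraic independence over $F$ nor the separating-transcendence-basis property — I may assume $w_i=z_i\in A$. Fix $R$-algebra generators $x_1,\ldots,x_n$ of $A$. In $A_F$ each $x_j$ satisfies a monic polynomial relation over $F[z_1,\ldots,z_d]$; since $A_F=S^{-1}A$ and only finitely many elements of $S$ intervene — namely the denominators needed to make those relations hold in a localization $A[c^{-1}]$ together with the denominators of the finitely many coefficients appearing — there is a single nonzero $c\in R$ for which, inside $A[c^{-1}]$, every $x_j$ is integral over $R[c^{-1}][z_1,\ldots,z_d]$. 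Hence $A[c^{-1}]$, being generated as an $R[c^{-1}][z_1,\ldots,z_d]$-algebra by the integral elements $x_1,\ldots,x_n$, is a module of finite type over $R[c^{-1}][z_1,\ldots,z_d]$; moreover $z_1,\ldots,z_d$ are algebraically independent already over $F\supseteq R[c^{-1}]$, so $R[c^{-1}][z_1,\ldots,z_d]$ is a genuine polynomial ring, and the separating-transcendence-basis statement is exactly the one transported from the field case.

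The only substantive input is the field version of Noether normalization — in particular its separable refinement, which is the delicate point in positive characteristic — and I take that as the stated merge of \cite{hochster} and \cite[Corollary 13.18]{eisenbud}. Everything else is the bookkeeping of the descent; the one thing to be careful about is to keep the auxiliary relations \emph{monic} in the $x_j$ when clearing denominators, so that \emph{integrality} (and not merely algebraicity) over $R[c^{-1}][z_1,\ldots,z_d]$ is what survives, and to check that a single $c$ serves for all generators and all coefficients at once — which is immediate, finitely many elements of $S$ being involved.
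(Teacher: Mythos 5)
The paper does not actually write out a proof of Lemma \ref{nn}: it invokes the merge of \cite{hochster} and \cite[Corollary 13.18]{eisenbud} and declares it ``readily checked that the two proofs can also be merged.'' Your argument is precisely that merge carried out explicitly — apply the separable refinement of Noether normalization over the field $F=\Frac R$ (Eisenbud's Corollary 13.18), then descend to $R[c^{-1}]$ by clearing denominators, which is exactly the mechanism underlying the over-a-domain version in \cite{hochster}. The bookkeeping you flag (keeping the integral dependence relations monic when passing from $F[\un{z}]$ to $R[c^{-1}][\un{z}]$, and choosing one $c$ that serves for all generators and all coefficient denominators at once) is the whole content of the descent and you handle it correctly. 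So the proposal is correct and takes essentially the route the paper intends, merely written out in full rather than left to the reader.
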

An interesting remark is that the element $c$ satisfying Lemma \ref{freeness} and Theorem \ref{flatness} can also be chosen to satisfy
Lemma \ref{nn}. This is clear by looking at the proofs of these results.\par
Therefore, going back to the proof of Theorem \ref{main2}, we can apply Lemma \ref{nn} to the situation $A=\faktor{K[\un{T},\un{Y}]}{\id{p}_T}$ and $R=K[\un{T}]$. We get that
\[
\faktor{K[\un{T},\un{Y}]}{\id{p}_T}\hspace{-2pt}\left[c(\un{T})^{-1}\right]=K[\un{T}]\hspace{-2pt}\left[c(\un{T})^{-1}\right][\un{z}(\un{T})][\un{\theta}(\un{T})]
\]
for $c(\un{T})\in K[\un{T}]$ the same as in Section \ref{first}, $\un{z}(\un{T})=(z_1(\un{T}),\ldots,z_d(\un{T}))$ a separating transcendence basis in $\faktor{K[\un{T},\un{Y}]}{\id{p}_T}$ and $\un{\theta}(\un{T})=(\theta_1(\un{T}),\ldots,\theta_m(\un{T}))$ the elements
generating $\faktor{K[\un{T},\un{Y}]}{\id{p}_T}\hspace{-2pt}\left[c(\un{T})^{-1}\right]$ as a $K[\un{T}]\hspace{-2pt}\left[c(\un{T})^{-1}\right][\un{z}(\un{T})]$-module. Moreover, $\un{z}(\un{T})$ is
also separating, i.e. the field
$\Frac\left(\faktor{K[\un{T},\un{Y}]}{\id{p}_T}\hspace{-2pt}\left[c(\un{T})^{-1}\right]\right)$ is
algebraically separable over $K(\un{T}, \un{z}(\un{T}))$.\par
Set $R_T\coloneqq K[\un{T}]\hspace{-2pt}\left[c(\un{T})^{-1}\right]$ and $A_T\coloneqq \faktor{K[\un{T},\un{Y}]}{\id{p}_T}\hspace{-2pt}\left[c(\un{T})^{-1}\right] $.
We apply the Primitive Element Theorem as in \cite[Theorem 5.1]{milne}:
there exists an element $\alpha(\un{T})\in\Frac\left(A_T\right) $ such that
\begin{equation}\label{quozienti}
   \Frac\left(A_T\right)
=K(\un{T},\un{z}(\un{T}))(\un{\theta}(\un{T})) =K(\un{T},\un{z}(\un{T}))(\alpha(\un{T}))\text{.}
\end{equation}\par
Moreover, by \cite[Remark 5.2]{milne}, $\alpha(\un{T})$ can be written as a linear combination
\begin{equation}\label{eq1}
 \alpha(\un{T})=\sum_{i=1}^m \alpha_i(\un{T})\theta_i(\un{T})   
\end{equation}
with $\alpha_i(\un{T})\in K[\un{T},\un{z}(\un{T})]$ and chosen to be integral over $R_T[\un{z}(\un{T})]$ (up to multiplying the $\alpha_i(\un{T})$
by some element of $K[\un{T},\un{z}(\un{T})]$).\par
For $i=1,\ldots,m$, let $\delta_i\in R_T[\un{z}(\un{T})]$ such that $\delta_i\theta_i(\un{T})$ is integral over $R_T[\un{z}(\un{T})]$.\par
Let $d(\un{T})\in R_T[\un{z}(\un{T})]$ be the product
of $\delta_1\cdots\delta_m$ with the discriminant of the $K(\un{T},\un{z}(\un{T}))$-basis
\[1,\alpha(\un{T}),\ldots,\alpha(\un{T})^{m-1}\]
of the $m$-dimensional $K(\un{T},\un{z}(\un{T}))$-vector space $K(\un{T},\un{z}(\un{T}))(\alpha(\un{T}))$.\par
As $R_T[\un{z}(\un{T})]$ is integrally closed, it is classical (e.g. \cite[Theorème 1.3.15(a)]{debes}) that
\begin{equation}\label{eq2}
    d(\un{T})\theta_i(\un{T})\in R_T[\un{z}(\un{T})][\alpha(\un{T})]\quad\forall i=1,\ldots,m\text{.}
\end{equation}
Moreover,
our choice of $\alpha(\un{T})$ implies that its minimal polynomial $p(\un{T},\un{z}(\un{T}),Y)$ 
over $K(\un{T},\un{z}(\un{T}))$ is in
$R_T[\un{z}(\un{T}),Y]$.\par
The field $K(\un{T},\un{z}(\un{T}),Y)$ is isomorphic to the field $K(\un{T},\un{W},Y)$ where $\un{W}$
is a new set of variables independent of $\un{T}$. Consider the polynomial $p(\un{T},\un{W},Y)$
image of $p(\un{T},\un{z}(\un{T}),Y)$ via this isomorphism and let 
\[H=\{\un{t}\in K^r|\text{ }p(t,\un{W},Y)\text{ is irreducible in }K[\un{W},Y]\}\]
be the Hilbert set of $p$.\par
For every $\un{t}\in H\subseteq K^r$, the polynomial $p(\un{t},\un{W},Y)$ is irreducible in $K[\un{W},Y]$.\par
It is important to remark that, for every $\un{t}\in K^r\setminus (C_1\cup C_2)$, where $C_2$ is the closed
set defined by $c(\un{t})=0$, a specialization morphism
can be defined that maps $A_T$ to $A_t\hspace{-2pt}\left[c(\un{t})^{-1}\right]$ where $A_t=\faktor{K[\un{Y}]}{\id{p}_t}\left[c(\un{t})^{-1}\right]$. We denote
the images of $\un{z}(\un{T})$ and $\un{\theta}(\un{T})$ via this morphism by
$\un{z}(\un{t})$ and $\un{\theta}(\un{t})$ respectively.\par Furthermore, after specialization in $\un{T}=\un{t}\in K^s\setminus (C_1\cup C_2)$, the elements $\un{z}_i(\un{t})$
are still algebraically independent as Section \ref{first} implies that the transcendence degree is preserved through specialization at $\un{t}$, i.e.
\[d=\trdeg_{K(\un{T})}\Frac(A_T)=\trdeg_K \Frac(A_t)=\trdeg_K K(z_1(\un{t}),\ldots,z_d(\un{t}))\]\par
Therefore, for $\un{t}$ outside of $(C_1\cup C_2)$, $K[\un{z}(\un{t})]$ is still
a polynomial ring of dimension $d$, hence isomorphic to $K[\un{W}]$. As a result, denoting by $\alpha(\un{t})$ the specialization of $\alpha(\un{T})$ given by (\ref{eq1}), the polynomial $p(\un{t},\un{z}(\un{t}),Y)\in K[\un{z}(\un{t}),Y]$ must also be irreducible for $\un{t}\in H\setminus (C_1\cup C_2)$ so
\[K(\un{z}(\un{t}))[\alpha(\un{t})]\cong\faktor{K(\un{z}(\un{t}))[Y]}{\langle p(\un{t},\un{z}(\un{t}),Y)\rangle}\]
is a field.\par
Specializing $\un{T}$ in $\un{t}\in K^r$ outside of the Zariski-closed set $C_3$ defined by $d(\un{t})=0$,
conclusion
(\ref{eq2}) implies that $\theta_i(\un{t})\in K(\un{z}(\un{t}))[\alpha(\un{t})]$ for every $i$.\par
Finally, for $\un{t}\in H\setminus(C_1\cup C_2\cup C_3)$, which is a Hilbert set, $\theta_i(\un{t})\in K(\un{z}(\un{t}))[\alpha(\un{t})]$ for $i=1,\ldots,m$
so $K[\un{z}(\un{t})][\un{\theta}(\un{t})]$ is a subring of $K(\un{z}(\un{t}))[\alpha(\un{t})]$, which is a field, so \[K[\un{z}(\un{t})][\un{\theta}(\un{t})]\cong\faktor{K[\un{Y}]}{\id{p}_t}\]
must be integral. This proves statement (i) of Theorem \ref{main11}.

\section{Theorems \ref{main2} and \ref{main3}}
Before discussing the other two main results, we want to focus on an important tool for their proofs:
{\itshape quasi-generic} polynomials.
\subsection{Quasi-generic polynomials}
In the Introduction, we have briefly talked about generic polynomials. In fact, we want to define a
larger class of polynomials, the {\itshape quasi-generic polynomials}, of which the generic
polynomial is the principal example.
\begin{defin}\label{qg}
Let $K$ be a field, $K[\un{Y}]$ the ring of polynomials with coefficients in $K$ and variables $\un{Y}$.
Given an integer $D\geq0$, a set \[S=\{Q_1(\un{Y}),\ldots,Q_{|S|}(\un{Y})\}\subseteq\{Y_1^{\beta_1}\cdots Y_s^{\beta_s},\beta_i\geq0\text{ and }\sum_{i=1}^s\beta_i\leq D\}\] of power products of degree at most $D$, which always contains $Q_1(\un{Y})=1$ and a polynomial $R(\un{Y})\in K[\un{Y}]$, we define the {\itshape quasi-generic}
polynomial of base $S,R$:
\[\mathcal{Q}_{S,R}(\un{\Lambda},\un{Y})=\sum_{i=1}^{|S|} \Lambda_i\hspace{1pt}Q_i(\un{Y}) +R(\un{Y})\]
where $\un{\Lambda}=(\Lambda_1,\ldots,\Lambda_{|S|})$ is a new set of variables called the set of parameters.
\end{defin}
We note that, by taking all
the power products for $i=1,\ldots,|S|$ and $R(\un{Y})=0$, we obtain the generic polynomial of degree $D$.\par
The importance of such polynomials is shown in the following lemma.
\begin{lem}\label{ramero}
Let $K$ be a field. Let $\id{p}$ be a prime ideal in $K[\un{Y}]$ of height $\height \id{p}$ and $\mathcal{Q}_{S,R}(\un{\Lambda},\un{Y})$
a quasi-generic polynomial. Assume that $S$, $\id{p}$ and $R(\un{Y})$ satisfy hypothesis {\normalfont(\ref{H})} stated below.
Denote by $\id{P}$ the ideal $\langle\id{p},\mathcal{Q}_{S,R}\rangle\subseteq K[\un{\Lambda},\un{Y}]$
and by $\tilde{\id{P}}$ the ideal $\langle\id{p},\mathcal{Q}_{S,R}\rangle\subseteq K(\un{\Lambda})[\un{Y}]$.
Then $\tilde{\id{P}}$ is a prime ideal of height $\height\tilde{\id{P}}=\height\id{p}+1$.
\end{lem}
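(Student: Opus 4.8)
The plan is to descend from $K(\un{\Lambda})[\un{Y}]$ to the polynomial ring $K[\un{\Lambda},\un{Y}]$, as in the proof of Lemma~\ref{fractions}. Set $S_0=K[\un{\Lambda}]\setminus\{0\}$, so that $K(\un{\Lambda})[\un{Y}]=S_0^{-1}\!\bigl(K[\un{\Lambda},\un{Y}]\bigr)$, and put $\id{P}'=\langle\id{p},\mathcal{Q}_{S,R}\rangle\subseteq K[\un{\Lambda},\un{Y}]$, so that $\tilde{\id{P}}=S_0^{-1}\id{P}'$. It then suffices to prove: (a) $\id{P}'$ is prime; (b) $\id{P}'\cap K[\un{\Lambda}]=\{0\}$; (c) $\height\id{P}'=\height\id{p}+1$. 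Indeed, (b) gives $\id{P}'\cap S_0=\emptyset$, so by the prime-correspondence for localizations $\tilde{\id{P}}=S_0^{-1}\id{P}'$ is a proper prime ideal; and since every prime contained in $\id{P}'$ is then automatically disjoint from $S_0$, passing to the localization does not change the height, so $\height\tilde{\id{P}}=\height\id{P}'=\height\id{p}+1$.

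Claims (a) and (c) are the routine part. Because $Q_1(\un{Y})=1$, the quasi-generic polynomial is monic of degree $1$ in $\Lambda_1$, namely
\[\mathcal{Q}_{S,R}=\Lambda_1+\Bigl(\sum_{i=2}^{|S|}\Lambda_i Q_i(\un{Y})+R(\un{Y})\Bigr),\]
so eliminating $\Lambda_1$ gives a $K$-algebra isomorphism $K[\un{\Lambda},\un{Y}]/\langle\mathcal{Q}_{S,R}\rangle\xrightarrow{\ \sim\ }K[\Lambda_2,\ldots,\Lambda_{|S|},\un{Y}]$, the identity on $\Lambda_2,\ldots,\Lambda_{|S|},\un{Y}$ and sending $\Lambda_1\mapsto-\sum_{i=2}^{|S|}\Lambda_i Q_i-R$. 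Factoring out further by $\id{p}$ (which is untouched, living in $K[\un{Y}]$) identifies
\[K[\un{\Lambda},\un{Y}]\big/\id{P}'\;\cong\;\left(\faktor{K[\un{Y}]}{\id{p}}\right)[\Lambda_2,\ldots,\Lambda_{|S|}],\]
a polynomial ring over the domain $K[\un{Y}]/\id{p}$; hence it is a domain and $\id{P}'$ is prime, which is (a). For (c), $K[\un{\Lambda},\un{Y}]$ is a polynomial ring over a field, so the height of any prime equals $\dim K[\un{\Lambda},\un{Y}]$ minus the dimension of the quotient; here this is $(|S|+s)-\bigl(\dim(K[\un{Y}]/\id{p})+(|S|-1)\bigr)=(|S|+s)-\bigl((s-\height\id{p})+|S|-1\bigr)=\height\id{p}+1$. (Alternatively: $\id{p}\,K[\un{\Lambda},\un{Y}]\subsetneq\id{P}'$ are both prime, the containment being strict since $\mathcal{Q}_{S,R}$ is monic in $\Lambda_1$ while $\id{p}\subseteq K[\un{Y}]$, so $\height\id{P}'\ge\height\id{p}+1$; and Krull's Hauptidealsatz applied to the single extra generator $\mathcal{Q}_{S,R}$ gives the reverse inequality.)

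The substance of the lemma — and the only point where hypothesis~(\ref{H}) enters — is (b). Under the isomorphism above, the composite
\[K[\un{\Lambda}]\hookrightarrow K[\un{\Lambda},\un{Y}]\twoheadrightarrow K[\un{\Lambda},\un{Y}]/\id{P}'\cong\left(\faktor{K[\un{Y}]}{\id{p}}\right)[\Lambda_2,\ldots,\Lambda_{|S|}]\]
fixes $\Lambda_2,\ldots,\Lambda_{|S|}$ and sends $\Lambda_1\mapsto g:=-\overline{R}-\sum_{i=2}^{|S|}\overline{Q_i}\,\Lambda_i$, the bar denoting reduction modulo $\id{p}$, and (b) is precisely the injectivity of this composite. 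Since $\Lambda_2,\ldots,\Lambda_{|S|}$ are already algebraically independent over $K$, this injectivity is equivalent to the statement that $g$ is transcendental over $K(\Lambda_2,\ldots,\Lambda_{|S|})$ inside $\Frac\bigl((K[\un{Y}]/\id{p})[\Lambda_2,\ldots,\Lambda_{|S|}]\bigr)$ — and this is exactly what hypothesis~(\ref{H}) on the triple $(S,\id{p},R)$ is tailored to provide. Some such hypothesis is genuinely needed: if every $\overline{Q_i}$ and $\overline{R}$ happens to be algebraic over $K$ — as when $\id{p}$ is maximal with residue field $K$ — then $g$ is algebraic over $K(\Lambda_2,\ldots,\Lambda_{|S|})$, the composite fails to be injective, and the conclusion of the lemma breaks down. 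This transcendence input is therefore the main obstacle; everything else is formal.
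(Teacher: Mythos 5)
Your reorganization — descend to $K[\un{\Lambda},\un{Y}]$, establish (a) primality of $\id{P}'$, (b) $\id{P}'\cap K[\un{\Lambda}]=\{0\}$, (c) $\height\id{P}'=\height\id{p}+1$, and then transport everything through the localization at $K[\un{\Lambda}]\setminus\{0\}$ — is sound, and you handle (a) and (c) correctly. Your proof of (c) via the dimension formula for finitely generated domains over a field is in fact a bit cleaner than the paper's, which instead passes to the ring $B_{\Lambda}$ and invokes Krull's Height Theorem (its Fourth and Fifth steps). Your reformulation of (b) as injectivity of the composite $K[\un{\Lambda}]\to B[\Lambda_2,\ldots,\Lambda_{|S|}]$, $\Lambda_1\mapsto g=-\overline{R}-\sum_{i\geq2}\overline{Q_i}\Lambda_i$, hence as transcendence of $g$ over $K(\Lambda_2,\ldots,\Lambda_{|S|})$, is also correct and transparent.

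The gap is exactly at the point you flag as ``the main obstacle.'' Hypothesis (\ref{H}) does not say that $g$ is transcendental over $K(\Lambda_2,\ldots,\Lambda_{|S|})$; it says the affine map $\phi_{S,R}:K^{|S|-1}\to B/E$ is not identically zero, i.e.\ there is a $K$-point $\un{a}$ with $\overline{R}+\sum_{i\geq2}a_i\overline{Q_i}\notin E$. Passing from this $K$-point statement to the transcendence of the ``generic'' element $g\in B[\Lambda_2,\ldots,\Lambda_{|S|}]$ over $K(\Lambda_2,\ldots,\Lambda_{|S|})$ is the actual content of the lemma, and your write-up asserts it rather than proving it. What you would need is the converse of the easy observation you make at the end: if $g$ is algebraic over $K(\Lambda_2,\ldots,\Lambda_{|S|})$, then \emph{all} of $\overline{R},\overline{Q_2},\ldots,\overline{Q_{|S|}}$ lie in $E$. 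This is not a restatement of (\ref{H}). One can get it, for instance, by applying the $K(\Lambda_i:i\neq j)$-automorphisms $\Lambda_j\mapsto\Lambda_j+1$ to extract each $\overline{Q_j}$ as a difference of algebraic elements and then using that an element of $\Frac(B)$ algebraic over $K(\Lambda_2,\ldots,\Lambda_{|S|})$ is already algebraic over $K$ (transcendence-degree count); or one can run the specialization argument the paper actually uses in its Second step, which chooses a suitable $\un{a}\in K^{|S|-1}$ and requires a separate case, via base change to $\overline{K}$, when $K$ is finite. Either way this is a genuine step, and it is missing from your proposal.
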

To state hypothesis (\ref{H}), consider
the set $E$ of elements in $B\coloneqq\faktor{K[\un{Y}]}{\id{p}}$ which are algebraic over $K$.
Clearly $E$ is a field containing $K$.
Let then
\[\phi_{S,R}:K^{|S|-1}\to \faktor{B}{E}\]
be the map sending an $(|S|-1)$-uple $(a_2,\ldots,a_{|S|})$ to the coset modulo $E$ of the element $\sum_{i=2}^{|S|} a_iQ_i(\un{Y})+R(\un{Y})$.
\begin{defin}
The triple $(\id{p},S,R(\un{Y}))$ satisfies hypothesis {\normalfont(\ref{H})} if
\leqnomode
\begin{equation}\tag{H}\label{H}
    \phi_{S,R}\text{ is not identically zero.}
\end{equation}
\reqnomode
\end{defin}
\begin{lem}\label{lemh}
{\normalfont (i)} If the triple $(\id{p},S,R(\un{Y}))$ satisfies hypothesis {\normalfont(\ref{H})}, then $\id{p}$
is a non-maximal ideal of $K[\un{Y}]$.\par
{\normalfont (ii)} If $\id{p}$ is a non-maximal of $K[\un{Y}]$ and $\{Y_1,\ldots,Y_s\}\subset S\cup\{R(\un{Y})\}$, then the triple $(\id{p},S,R(\un{Y}))$ satisfies hypothesis {\normalfont(\ref{H})}.
\end{lem}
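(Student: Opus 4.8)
The plan rests on one structural fact. Write $B=\faktor{K[\un{Y}]}{\id{p}}$, a finitely generated $K$-algebra which is a domain. By Zariski's Lemma (the algebraic form of the Nullstellensatz), the following are equivalent: $\id{p}$ is maximal in $K[\un{Y}]$; $B$ is a field; $B$ is algebraic over $K$; $B=E$. Hence $\id{p}$ is \emph{non}-maximal exactly when $B\neq E$, that is, when $B$ contains an element not algebraic over $K$. It is also convenient to note that, writing $\bar f$ for the image in $B$ of a polynomial $f\in K[\un{Y}]$ and regarding $\faktor{B}{E}$ as a quotient $K$-vector space, the map $\phi_{S,R}$ is affine-$K$-linear: its value at $(a_2,\dots,a_{|S|})$ is the class of $\sum_{i=2}^{|S|}a_i\bar Q_i+\bar R$, so its linear part sends $(a_2,\dots,a_{|S|})$ to $\bigl[\sum_{i=2}^{|S|}a_i\bar Q_i\bigr]$ and its constant term is $[\bar R]$.

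For (i): if {\normalfont(\ref{H})} holds there is $(a_2,\dots,a_{|S|})\in K^{|S|-1}$ with $\phi_{S,R}(a_2,\dots,a_{|S|})\neq 0$ in $\faktor{B}{E}$, meaning that the element $\sum_{i=2}^{|S|}a_i\bar Q_i+\bar R$ of $B$ does not lie in $E$, hence is not algebraic over $K$. So $B\neq E$, and by the equivalence above $\id{p}$ is not maximal.

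For (ii): assume $\id{p}$ is non-maximal, so $B\neq E$, and that $\{Y_1,\dots,Y_s\}\subseteq S\cup\{R(\un{Y})\}$. Since $E$ is a subring of $B$ containing $K$ and $B=K[\bar Y_1,\dots,\bar Y_s]$, not all of $\bar Y_1,\dots,\bar Y_s$ can lie in $E$ (otherwise $B\subseteq E$); fix $i_0$ with $\bar Y_{i_0}\notin E$, so $[\bar Y_{i_0}]\neq 0$ in $\faktor{B}{E}$. If $Y_{i_0}=R(\un{Y})$, then $\phi_{S,R}(0,\dots,0)=[\bar R]=[\bar Y_{i_0}]\neq 0$, so {\normalfont(\ref{H})} holds. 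Otherwise $Y_{i_0}$ occurs among the power products in $S$; as $Y_{i_0}$ is a non-constant monomial while $Q_1=1$, we get $Y_{i_0}=Q_{j_0}(\un{Y})$ for some $j_0\geq 2$, so $\Lambda_{j_0}$ is one of the parameters. Comparing $\phi_{S,R}(0,\dots,0)=[\bar R]$ with $\phi_{S,R}(\un{a})=[\bar Q_{j_0}+\bar R]=[\bar Y_{i_0}+\bar R]$, where $\un{a}\in K^{|S|-1}$ has a $1$ in the $j_0$-th slot and $0$ elsewhere, we see that their difference in $\faktor{B}{E}$ is $[\bar Y_{i_0}]\neq 0$, so at least one of the two values is nonzero; again {\normalfont(\ref{H})} holds.

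There is no serious obstacle here; the only point needing a moment's care is the last case of (ii): since $R(\un{Y})$ may itself involve the variable $Y_{i_0}$, a single evaluation of $\phi_{S,R}$ need not visibly be nonzero, so one must use the affine-linear structure of $\phi_{S,R}$ — subtracting two evaluations to isolate the class $[\bar Y_{i_0}]$ — rather than plugging in one point. One must also remember that $Y_{i_0}$, being a genuine variable, cannot be the constant monomial $Q_1=1$, which is exactly what forces $j_0\geq 2$ and guarantees that the index used corresponds to an actual parameter $\Lambda_{j_0}$.
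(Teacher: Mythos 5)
Your proof is correct and follows essentially the same route as the paper: part (i) comes down to Zariski's Lemma showing that maximality of $\id{p}$ forces $B=E$, and part (ii) is the contrapositive of the paper's argument, evaluating $\phi_{S,R}$ at $0$ and at the standard basis vectors (and subtracting to isolate $[\bar Q_{j}]$) to place each $\bar Y_j$ in $E$. Your extra care about the affine-linear structure and about $Q_1=1$ forcing $j_0\geq 2$ are welcome clarifications, but they do not change the underlying argument.
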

\begin{proof}
(i) By contradiction, assume that $\id{p}$ is maximal. Then $B$ is a $K$-algebra of finite type and
a field, so by \cite[Corollary 5.24]{am} $B$ is an algebraic extension of $K$, hence $B=E$ and $\phi_{S,R}$
is identically zero.\par
(ii) By contradiction, assume that $\phi_{S,R}$ is identically zero. Then, $\phi_{S,R}(0,\ldots,0)=0$
and $\phi_{S,R}(e_i)=0$ for every $i$, where $\{e_i,\text{ }i=1,\ldots,|S|-1\}$ is the canonical
base of $K^{|S|-1}$ as a $K$-vector space. Then $Y_i\in E$ for every $i$, but
$\{Y_i,\text{ }i=1,\ldots,s\}$ generates $B$ over $K$, hence $B=E$, i.e. $B$ is a field, which is a
contradiction with $\id{p}$
being non-maximal.
\end{proof}
\begin{proof}[Proof of Lemma \ref{ramero}]
{\bfseries First step.} We show that $\id{P}$ is a prime ideal of $K[\un{\Lambda},\un{Y}]$.\par
Using a similar strategy as in \cite[Lemma 2.1(a)]{bdn}, consider the ring automorphism
\begin{equation}\label{auto}
    f:K[\un{\Lambda},\un{Y}]\to K[\un{\Lambda},\un{Y}]
\end{equation}which is the identity on
$K[\Lambda_2,\ldots,\Lambda_{|S|},\un{Y}]$ and sends $\Lambda_1$ to $\Lambda_1-\sum_{i=2}^{|S|} \Lambda_i Q_i(\un{Y})-R(\un{Y})$. The
ideal $\langle\id{p},\mathcal{Q}_{S,R}\rangle$ is then sent to the ideal $\langle\id{p},\Lambda_1\rangle$.\par
Now consider the specialization morphism, $f_0:K[\un{\Lambda},\un{Y}]\to K[\Lambda_2,\ldots,\Lambda_{|S|},\un{Y}]$ sending
$\Lambda_1$ to $0$. The ideal $\langle\id{p}\rangle$ in $K[\Lambda_2,\ldots,\Lambda_{|S|},\un{Y}]$ is prime as the following isomorphism shows 
\begin{equation}\label{ext}
\faktor{K[\Lambda_2,\ldots,\Lambda_{|S|},\un{Y}]}{\langle\id{p}\rangle}\cong\faktor{K[\un{Y}]}{\id{p}}[\Lambda_2,\ldots,\Lambda_{|S|}]\text{.}
\end{equation}
So its preimage under $f_0$, i.e. the ideal $\id{p}+\ker f_0=\langle\id{p},\Lambda_1\rangle$ is also prime.\par
As a result, the ideal $\id{P}=\langle\id{p},\mathcal{Q}_{S,R}\rangle$ is prime in $K[\un{\Lambda},\un{Y}]$, being sent to a prime ideal by $f$.\par\smallskip
{\bfseries Second step.} We show that $\mathcal{Q}_{S,R}$ is not invertible in the ring $B_{\Lambda}\coloneqq \faktor{K(\un{\Lambda})[\un{Y}]}{\tilde{\id{p}}_{\Lambda}}$, where $\tilde{\id{p}}_\Lambda$ is the extension of $\id{p}$ to $K(\un{\Lambda})[\un{Y}]$.\par
We note that the quotient $B_{\Lambda}$ is integral and non-trivial. Indeed, the ideal
$\tilde{\id{p}}_{\Lambda}$ is prime in $K(\un{\Lambda})[\un{Y}]$: the ideal
$\id{p}_{\Lambda}=\langle\id{p}\rangle\subset K[\un{\Lambda},\un{Y}]$ is prime (proceed similarly
as in (\ref{ext})) and $\id{p}_{\Lambda}\cap K[\un{\Lambda}]=\{0\}$ because, otherwise, if there was
some nonzero
$P(\un{\Lambda})$ in $\id{p}_{\Lambda}$, then for every $\un{\lambda}\in K^{|S|}$ such that $P(\un{\lambda})\neq0$,
$P(\un{\lambda})\in\id{p}$, which is a contradiction because $\id{p}\neq  K[\un{Y}]$.\par
Now, by contradiction, assume that $\mathcal{Q}_{S,R}$ is invertible in $B_{\Lambda}$.\par
Then, there exists $\alpha\in B_{\Lambda}$ such that $\alpha\hspace{1pt} \mathcal{Q}_{S,R}=1$.
As $B_{\Lambda}=S^{-1}B[\un{\Lambda}]$ with $S=K[\un{\Lambda}]$, we can write $\alpha=\frac{N(\un{\Lambda})}{P(\un{\Lambda})}$ for $N\in B[\un{\Lambda}]$ and
$P\in K[\un{\Lambda}]$, $P\neq0$.\par
As, by hypothesis (\ref{H}), $\phi_{S,R}$ is not identically $0$, the linear subvariety $V=\phi_{S,R}^{-1}(0)$ is of dimension strictly smaller
that $|S|-1$.\par Define the set
\[Z\coloneqq\{\un{a}=(a_2,\ldots,a_r)\in K^{|S|-1} : P(\Lambda_1,a_2,\ldots,a_{|S|})=0\}\text{.}\]
If we write $P(\un{\Lambda})=\sum _{i=1}^k p_i(\Lambda_2,\ldots,\Lambda_{|S|})\Lambda_1^i$, then we see that
$Z=\bigcap_{i=0}^k V(p_i)$, where $V(p_i)$
is the zero locus of $p_i$ in $K^{|S|-1}$. We distinguish two cases.\par
{\itshape First case.} Assume that $K$ is infinite.\par
The polynomial $P(\un{\Lambda})$ is nonzero, so, in particular, there exists $i$ such that $p_i\neq0$.
As $Z\subseteq V(p_i)$, then $Z\cup V\subset V(p_i)\cup V$. The set $V(p_i)\cup V$ is a proper closed set because
union of two proper closed sets. Thus, if $K$ is infinite, $V(p_i)\cup V\neq K^{|S|-1}$, hence $V\cup Z\neq K^{|S|-1}$.\par
Take then $\un{a}\in K^{|S|-1}\setminus (V\cup Z)$.
Recall that $N(\un{\Lambda})\mathcal{Q}_{S,R}(\un{\Lambda},\un{Y}) =P(\un{\Lambda})$. So, as $\mathcal{Q}_{S,R}(\un{\Lambda},\un{Y})$ divides $P(\un{\Lambda})$ in
$B[\un{\Lambda}]$, it follows that $m(\Lambda_1)\coloneqq \mathcal{Q}_{S,R}(\Lambda_1,\un{a},\un{Y})$
divides $p(\Lambda_1)\coloneqq P(\Lambda_1,\un{a})$ in $B[\Lambda_1]$.\par
By construction of $\un{a}$, we have $p(\Lambda_1)\neq0$ and $m(\Lambda_1)=\Lambda_1+Q(\un{Y})$ with $Q(\un{Y})=\sum_{i=2}^{|S|} a_iQ_i(\un{Y})+R(\un{Y})$. Then $\Lambda_1=-Q(\un{Y})$ is a root of $p(\Lambda_1)=0$ which, by construction, has coefficients in $K$ so its roots are algebraic over $K$.
But $Q(\un{Y})$ is transcendental over $K$: the coset modulo $E$ of $Q(\un{Y})$
is $\phi_{S,R}(\un{a})\neq0$ because $\un{a}\notin V$, so $Q(\un{Y})\notin E$. This is a contradiction.\par
{\itshape Second case.} Assume that $K$ is finite. Let $K'$ be an algebraic closure of $K$. By 
\cite[Theorem 5.10]{am}, there exists a prime ideal $\id{p}'$ in $K'[\un{Y}]$ such that $\id{p}'\cap K[\un{Y}]=\id{p}$. Moreover, by the Going-up Theorem \cite[Theorem 5.11]{am} and the incomparability
property \cite[Corollary 5.9]{am} we have $\height\id{p}'=\height\id{p}$.\par
Replacing $K$ and $\id{p}$ by
$K'$ and $\id{p}'$ we can get back to the first case. Indeed, define $B'\coloneqq\faktor{K'[\un{Y}]}{\id{p}'}$ and $B'_{\Lambda}\coloneqq K(\un{\Lambda})\otimes_{K[\un{\Lambda}]}B'[\un{\Lambda}]$
and apply the first case to the image of $\mathcal{Q}_{S,R}$ under the induced homomorphism
$B_{\Lambda}\to B'_{\Lambda}$. The image of the polynomial $\mathcal{Q}_{S,R}$ is then not invertible
in $B'_{\Lambda}$, which implies that $\mathcal{Q}_{S,R}$ is not invertible in $B_{\Lambda}$, for
otherwise the previous homomorphism yields an invertible element in $B'_{\Lambda}$. \par\smallskip
{\bfseries Third step.} The fact that $\mathcal{Q}_{S,R}$ is not invertible in the ring $B_{\Lambda}$ implies that $\id{P}\cap K[\un{\Lambda}]=\{0\}$. If this was not the case, we would have $\tilde{\id{P}}=K(\un{\Lambda})[\un{Y}]$.
But, then, we could find $A(\un{\Lambda},\un{Y}), B(\un{\Lambda},\un{Y})\in K(\un{\Lambda})[\un{Y}]$ and
$P(\un{Y})\in\id{p}$ such that
\[A(\un{\Lambda},\un{Y})P(\un{Y})+B(\un{\Lambda},\un{Y})\mathcal{Q}_{S,R}(\un{\Lambda},\un{Y})=1\text{.}\]
Reducing this equality modulo $\tilde{\id{p}}_{\Lambda}$, we would obtain that $\mathcal{Q}_{S,R}$ is invertible in $B_{\Lambda}$, which is a contradiction.\par
Saying that that $\id{P}\cap K[\un{\Lambda}]=\{0\}$ is also equivalent to saying that $\tilde{\id{P}}$ is a prime ideal of $K(\un{\Lambda})[\un{Y}]$, by bijective correspondence \cite[Proposition 3.11(iv)]{am}.\par\smallskip
{\bfseries Fourth step.} The polynomial $\mathcal{Q}_{S,R}$ is not contained in $\tilde{\id{p}}_{\Lambda}$, i.e.
$\tilde{\id{p}}_{\Lambda}\subsetneqq\tilde{\id{P}}$. Otherwise, we could write the
following relation
\[\mathcal{Q}_{S,R}(\un{\Lambda},\un{Y})=\sum_{i=1}^n A_i(\un{\Lambda},\un{Y})P_i(\un{Y})\]
for $A_i\in K(\un{\Lambda})[\un{Y}]$ and $P_i(\un{Y})\in\id{p}$. Specializing this equality in $\un{\lambda}=\un{0}$ and $\un{\lambda}=(1,0,\ldots,0)$, we would find that $R$ and $1+R$, respectively, belong to $\id{p}$, so $1\in\id{p}$, which is a contradiction.\par
\smallskip
{\bfseries Fifth step.} It follows from $\tilde{\id{P}}$ being a prime ideal and $\tilde{\id{p}}_{\Lambda}\nsubseteq\tilde{\id{P}}$ that the quotient
$\faktor{\tilde{\id{P}}}{\tilde{\id{p}}_{\Lambda}}$ is a nonzero prime ideal
of $B_{\Lambda}$.\par
The ring $B_{\Lambda}$ is integral and Noetherian by construction and the element
$\mathcal{Q}_{S,R}\mod \tilde{\id{p}}_{\Lambda}$ is nonzero and is not invertible in $B_{\Lambda}$. By Krull's Height Theorem
\cite[Theorem 1.11A]{hartshorne}, the ideal $\faktor{\tilde{\id{P}}}{\tilde{\id{p}}_{\Lambda}}\subset B_{\Lambda}$ has height $1$, so the ideal $\tilde{\id{P}}$ has height
$\height\tilde{\id{p}}_{\Lambda}+1=\height\id{p}+1$ in $K(\un{\Lambda})[\un{Y}]$.
\end{proof}
Now, consider the ideal $\id{P}=\langle\id{p},\mathcal{Q}_{S,R}\rangle\subset K[\un{\Lambda},\un{Y}]$. If we assume that $K$ has characteristic $0$, we have just showed that
$\id{P}$ satisfies all the hypotheses of Theorem \ref{main11}. Its conclusion already proves
Theorem \ref{main2} as it is stated in the Introduction. As promised we will establish a more general version
using several quasi-generic polynomials.\par 
In the following
sections we present
two recursive generalizations of Lemma \ref{ramero} and see how they imply Theorem \ref{main2} (generalized) and Theorem \ref{main3}.

\subsection{Intersection of varieties} Fix $\rho>0$. For $i=1,\ldots,\rho$, fix a non-negative
integer $D_i$ and then consider the quasi-generic polynomial $\mathcal{Q}_{S_i,R_i}(\un{\Lambda}_i,\un{Y})$
of basis the set $S_i$ of all the power products in the variables $\un{Y}$ of degree
$\leq D_i$ and $R_i=0$; the additional variables $\un{\Lambda}_i$ form the \say{set of parameters}
of Definition \ref{qg}. In fact, given this choice of
$S_i$ and $R_i$, the polynomial $\mathcal{Q}_{S_i,R_i}(\un{\Lambda}_i,\un{Y})$ is the generic polynomial of degree
$D_i$, so, in this section, we will denote it by $\mathcal{Q}_{D_i}(\un{\Lambda}_i,\un{Y})$.\par
Set $K[\un{\un{\Lambda}},\un{Y}]=K[\un{\Lambda}_1,\ldots,\un{\Lambda}_{\rho},\un{Y}]$ where
$\un{\un{\Lambda}}=(\un{\Lambda}_1,\ldots,\un{\Lambda}_{\rho})$.\par
The following statement generalizes Lemma \ref{ramero} for this set of data.
\begin{thm}\label{ramero1}
Let $K$ be a field. Let $\id{p}$ be a non-maximal prime ideal of $K[\un{Y}]$ such that
$\dim_K\left(\faktor{K[\un{Y}]}{\id{p}}\right)=d>0$. Let $\mathcal{Q}_{D_i}(\un{\Lambda}_1,\un{Y}),\ldots,\mathcal{Q}_{D_{\rho}}(\un{\Lambda}_{\rho},\un{Y})$ be the generic polynomials defined above for $0<\rho\leq d$. Then the ideal $\id{P}_{\rho}=
\langle\id{p},\mathcal{Q}_{D_1},\ldots,\mathcal{Q}_{D_{\rho}}\rangle$ is a prime ideal of $K[\un{\un{\Lambda}},\un{Y}]$ such that
$\id{P}_{\rho}\cap K[\un{\un{\Lambda}}]=\{0\}$ and
\[\dim_{K(\un{\un{\Lambda}})}\left(\faktor{K(\un{\un{\Lambda}})[\un{Y}]}{\tilde{\id{P}}_{\rho}}\right)=d-\rho,\] where $\tilde{\id{P}}_{\rho}$ is the extension of $\id{P}_{\rho}$ to $K(\un{\un{\Lambda}})[\un{Y}]$.
\end{thm}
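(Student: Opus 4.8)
The natural approach is induction on $\rho$, at each step absorbing the parameters of the already–treated generic polynomials into the base field. For $\rho=1$ the statement is exactly Lemma \ref{ramero} applied with $S=S_1$ and $R=R_1=0$: hypothesis (\ref{H}) holds for the triple $(\id{p},S_1,0)$ by Lemma \ref{lemh}(ii), since $\id{p}$ is non-maximal (because $\dim_K\left(\faktor{K[\un{Y}]}{\id{p}}\right)=d>0$) and $\{Y_1,\dots,Y_s\}\subseteq S_1$, the latter using $D_1\geq1$ (which is implicit here, as otherwise $\mathcal{Q}_{D_1}=\Lambda_1$ would be a unit of $K(\un{\Lambda}_1)[\un{Y}]$). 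Lemma \ref{ramero} then gives that $\tilde{\id{P}}_1$ is prime of height $\height\id{p}+1$, hence of dimension $d-1$; moreover the first and third steps of its proof yield that $\id{P}_1$ is prime in $K[\un{\Lambda}_1,\un{Y}]$ and that $\id{P}_1\cap K[\un{\Lambda}_1]=\{0\}$.

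For the inductive step, assume the statement for $\rho-1$, put $K'=K(\un{\Lambda}_1,\dots,\un{\Lambda}_{\rho-1})$, and let $\id{q}=\tilde{\id{P}}_{\rho-1}\subseteq K'[\un{Y}]$. By the inductive hypothesis $\id{q}$ is prime with $\dim_{K'}\left(\faktor{K'[\un{Y}]}{\id{q}}\right)=d-\rho+1$, which is positive because $\rho\leq d$, so $\id{q}$ is non-maximal. View $\mathcal{Q}_{D_\rho}(\un{\Lambda}_\rho,\un{Y})$ as the generic polynomial of degree $D_\rho$ over the field $K'$ — it is a quasi-generic polynomial of base $S_\rho$ and $R_\rho=0$ over $K'$, since $S_\rho$ is unchanged and $\un{\Lambda}_\rho$ remains algebraically independent over $K'$ — so hypothesis (\ref{H}) for $(\id{q},S_\rho,0)$ over $K'$ holds again by Lemma \ref{lemh}(ii). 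Lemma \ref{ramero} over $K'$ then shows that $\langle\id{q},\mathcal{Q}_{D_\rho}\rangle\subseteq K'(\un{\Lambda}_\rho)[\un{Y}]$ is prime of height $\height_{K'[\un{Y}]}\id{q}+1$. Now $K'(\un{\Lambda}_\rho)=K(\un{\un{\Lambda}})$ and $\id{q}$ is generated over $K'[\un{Y}]$ by the same polynomials $P_1,\dots,P_l,\mathcal{Q}_{D_1},\dots,\mathcal{Q}_{D_{\rho-1}}$ that generate $\id{P}_{\rho-1}$, so $\langle\id{q},\mathcal{Q}_{D_\rho}\rangle$ is precisely $\tilde{\id{P}}_\rho$; hence $\tilde{\id{P}}_\rho$ is prime, and its quotient has dimension one less than that of $\faktor{K'[\un{Y}]}{\id{q}}$, namely $(d-\rho+1)-1=d-\rho$.

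It remains to descend to $K[\un{\un{\Lambda}},\un{Y}]$. The equality $\id{P}_\rho\cap K[\un{\un{\Lambda}}]=\{0\}$ is immediate: a nonzero element of $K[\un{\un{\Lambda}}]$ lying in $\id{P}_\rho$ would be a unit of $K(\un{\un{\Lambda}})[\un{Y}]$ contained in the proper ideal $\tilde{\id{P}}_\rho$. Primality of $\id{P}_\rho$ in $K[\un{\un{\Lambda}},\un{Y}]$, however, does \emph{not} follow formally from primality of $\tilde{\id{P}}_\rho$ (extension of an ideal to a localization can be prime without the ideal being prime), so I would re-run the first step of the proof of Lemma \ref{ramero} inside $K[\un{\un{\Lambda}},\un{Y}]$, with $\id{p}$ replaced by $\id{P}_{\rho-1}$ and $\Lambda_1$ replaced by the first coordinate $(\Lambda_\rho)_1$ of $\un{\Lambda}_\rho$: the automorphism of $K[\un{\un{\Lambda}},\un{Y}]$ fixing every variable except $(\Lambda_\rho)_1$ and sending $(\Lambda_\rho)_1$ to $(\Lambda_\rho)_1-\sum_{j=2}^{|S_\rho|}(\Lambda_\rho)_jQ_j(\un{Y})$ carries $\id{P}_\rho$ onto $\langle\id{P}_{\rho-1},(\Lambda_\rho)_1\rangle$, and this last ideal is the preimage, under the specialization $(\Lambda_\rho)_1\mapsto0$, of $\id{P}_{\rho-1}$ extended to $K[\un{\Lambda}_1,\dots,\un{\Lambda}_{\rho-1},\un{Y}][(\Lambda_\rho)_2,\dots,(\Lambda_\rho)_{|S_\rho|}]$, which is prime since $\id{P}_{\rho-1}$ is prime by the inductive hypothesis and adjoining indeterminates to a domain yields a domain. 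Hence $\id{P}_\rho$ is prime.

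I expect the main difficulty to be organizational rather than conceptual: one must keep track of the tower $K\subseteq K'\subseteq K(\un{\un{\Lambda}})$ and the successive extensions of $\id{p}$, $\id{P}_{\rho-1}$ and $\id{P}_\rho$, confirm that $\mathcal{Q}_{D_\rho}$ genuinely remains a (quasi-)generic polynomial over $K'$ so that Lemmas \ref{ramero} and \ref{lemh} apply verbatim, and — the one real trap — establish primality over $K[\un{\un{\Lambda}},\un{Y}]$ by the automorphism argument rather than reading it off from the extension. Keeping hypothesis (\ref{H}) alive along the induction is precisely what forces the assumption $\rho\leq d$, since it is needed to keep each intermediate quotient positive-dimensional, hence non-maximal.
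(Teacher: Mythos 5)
Your proof is correct, and it takes a genuinely different route from the paper's. The paper also inducts on $\rho$, but at each step it keeps the base field fixed at $K$ and instead enlarges the \say{$\un{Y}$-variables}: it views $\id{P}_{\rho-1}$ as a non-maximal prime ideal of $K[\un{\Lambda}_1,\dots,\un{\Lambda}_{\rho-1},\un{Y}]$ and applies Lemma~\ref{ramero} with that polynomial ring in the role of $K[\un{Y}]$ and $\un{\Lambda}_\rho$ as the new parameters. One application of the lemma then yields simultaneously primality of $\id{P}_\rho$ in $K[\un{\un{\Lambda}},\un{Y}]$ (step 1 of the lemma's proof), the zero intersection with $K[\un{\Lambda}_\rho]$ (step 3), and the height increment (steps 4--5). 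By contrast, you enlarge the \emph{base field} to $K'=K(\un{\Lambda}_1,\dots,\un{\Lambda}_{\rho-1})$ and apply Lemma~\ref{ramero} to $\id{q}=\tilde{\id{P}}_{\rho-1}\subset K'[\un{Y}]$; this buys you a cleaner application of Lemma~\ref{lemh}(ii), since over $K'[\un{Y}]$ the variables of the ambient ring are exactly $\un{Y}$, so $\{Y_1,\dots,Y_s\}\subset S_\rho$ really does give hypothesis~(\ref{H}) verbatim. (In the paper's version, the ambient ring is $K[\un{\Lambda}_1,\dots,\un{\Lambda}_{\rho-1},\un{Y}]$ whereas $S_\rho$ contains only $\un{Y}$-monomials, so Lemma~\ref{lemh}(ii) as literally stated does not apply; the conclusion still holds, but requires a slight variant argument passing through the quotient over the fraction field, which the paper glosses over.) The price you pay is that primality of $\id{P}_\rho$ in $K[\un{\un{\Lambda}},\un{Y}]$ no longer falls out of Lemma~\ref{ramero} directly, and you are right that it is not a formal consequence of primality of $\tilde{\id{P}}_\rho$ plus trivial intersection with $K[\un{\un{\Lambda}}]$; your re-run of the automorphism argument from the lemma's first step correctly closes this gap. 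You also correctly flag the implicit requirement $D_i\geq1$ and the role of $\rho\leq d$ in keeping each intermediate quotient non-maximal — both are used but not spelled out in the paper. Net: same skeleton, trade-off shifted; yours is a bit longer but each step invokes the supporting lemmas exactly as stated.
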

\begin{proof}
We proceed by recursion on $\rho$.\par
The case $\boldsymbol{\rho=1}$ is exactly Lemma \ref{ramero} where $\id{P}_1$ is the ideal $\id{P}$ in the
statement of the lemma and, consequently, $\tilde{\id{P}}_1$ is the
ideal $\tilde{\id{P}}$. As previously remarked, the fact that $\height\tilde{\id{P}}_1=\height\id{p}+1$ is equivalent to saying that \[\dim_{K(\un{\un{\Lambda}})}\left(\faktor{K(\un{\un{\Lambda}})[\un{Y}]}{\tilde{\id{P}}_1}\right)=
\dim_K\left(\faktor{K[\un{Y}]}{\id{p}}\right)-1=d-1\text{.}\]\par\smallskip
For simplicity in the notation, we only explain the case $\boldsymbol{\rho=2}$. It will then be clear how
to prove the case for an arbitrary $\rho\leq d$.\par
Let $\id{P}_1=\langle\id{p},\mathcal{Q}_{D_1}\rangle\subset K[\un{\Lambda}_1,\un{Y}]$ be the ideal
obtained as in the case $\rho=1$. As $\dim K[\un{\Lambda}_1,\un{Y}]>\dim K[\un{Y}]$ and
$\height\id{P}_1=\height\id{p}+1$, the ideal $\id{P}_1$ is not maximal.
Moreover, by Lemma \ref{lemh}(ii), the triple $(\id{P}_1, S_2, 0)$ satisfies hypothesis (\ref{H}) because $\mathcal{Q}_{D_2}$
is the generic polynomial of degree $D_2$. Therefore, we can apply Lemma \ref{ramero}
to $\id{P}_1$ and $\mathcal{Q}_{D_2}$ and obtain that $\id{P}_2=\langle\id{p},\mathcal{Q}_{D_1},\mathcal{Q}_{D_2}\rangle$ is prime in $K[\un{\Lambda}_1,\un{\Lambda}_2,\un{Y}]$ and has height $\height \id{P}_2=\height\id{p}+2$,
i.e.
\[\dim_{K(\un{\un{\Lambda}})}\left(\faktor{K(\un{\un{\Lambda}})[\un{Y}]}{\tilde{\id{P}}_2}\right)=
\dim_K\left(\faktor{K[\un{Y}]}{\id{p}}\right)-2=d-2\text{.}\]
\end{proof}
Denote by $V_{\rho,\un{\un{\Lambda}}}$ the variety defined by $\mathcal{Q}_{D_1}(\un{\Lambda}_1,\un{Y}),\ldots,\mathcal{Q}_{D_{\rho}}(\un{\Lambda}_{\rho},\un{Y})$.
If $0<\rho\leq s$, as it is the case if $0<\rho\leq d$ as above, a recursive application of Lemma
\ref{ramero}, starting with $\id{p}=\langle\mathcal{Q}_{D_1}(\un{\Lambda}_1,\un{Y})\rangle$, easily shows
that $V_{\rho,\un{\un{\Lambda}}}$ is, in fact, an irreducible $K(\un{\un{\Lambda}})$-variety
of codimension $\rho$, i.e. $\langle\mathcal{Q}_{D_1}(\un{\Lambda}_1,\un{Y}),\ldots,\mathcal{Q}_{D_{\rho}}(\un{\Lambda}_{\rho},\un{Y})\rangle$ is a prime ideal of height $\rho$ in $K[\un{\un{\Lambda}},\un{Y}]$. We call
$V_{\rho,\un{\un{\Lambda}}}$ the {\itshape generic $K(\un{\un{\Lambda}})$-subvariety of codimension $\rho$}.\par
Using this remark, a general version of Theorem \ref{main2} follows from conjoining Theorem \ref{ramero1}
and Theorem \ref{main11}.
\begin{cor}\label{main21}
Let $K$ be a field of characteristic $0$. Let $V=V_K(\id{p})$ be an irreducible $K$-variety such that
$\dim_K\left(\faktor{K[\un{Y}]}{\id{p}}\right)=d>0$. Let $V_{\rho,\un{\un{\Lambda}}}$ be the generic
$K(\un{\un{\Lambda}})$-subvariety defined above. Then for
$\un{\un{\lambda}}=(\un{\lambda}_1,\ldots,\un{\lambda}_{\rho})$ in some Hilbert subset of
$K^{N_{D_1}+\ldots+N_{D_{\rho}}}$, the intersection $V\cap V_{\rho,\un{\un{\lambda}}}$ of $V$ with the
$K$-variety $V_{\rho,\un{\un{\lambda}}}$, obtained by specializing $\un{\un{\Lambda}}$ at $\un{\un{\lambda}}$, is an irreducible $K$-variety of dimension $d-\rho$.
\end{cor}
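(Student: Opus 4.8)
The plan is to deduce Corollary \ref{main21} by feeding the prime ideal produced by Theorem \ref{ramero1} into Theorem \ref{main11}, reproducing for a general $\rho$ exactly what was done for $\rho=1$ (the case $\rho=1$ being Theorem \ref{main2} of the Introduction, obtained from Lemma \ref{ramero} and Theorem \ref{main11}). First I would check that Theorem \ref{ramero1} applies: $\id{p}$ is a prime ideal of $K[\un{Y}]$ and, since $\dim_K(\faktor{K[\un{Y}]}{\id{p}})=d>0$, it is non-maximal, while $0<\rho\leq d$ holds by the standing assumption on $V_{\rho,\un{\un{\Lambda}}}$. Theorem \ref{ramero1} then gives that $\id{P}_\rho=\langle\id{p},\mathcal{Q}_{D_1},\ldots,\mathcal{Q}_{D_\rho}\rangle$ is a prime ideal of $K[\un{\un{\Lambda}},\un{Y}]$, that $\id{P}_\rho\cap K[\un{\un{\Lambda}}]=\{0\}$, and that the associated $K(\un{\un{\Lambda}})$-variety $V_{K(\un{\un{\Lambda}})}(\tilde{\id{P}}_\rho)$ has dimension $d-\rho$; geometrically, over the field $K(\un{\un{\Lambda}})$ this is the intersection of the base change of $V$ with the generic subvariety $V_{\rho,\un{\un{\Lambda}}}$.

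Next I would invoke Theorem \ref{main11} with the array $\un{\un{\Lambda}}$, of length $N_{D_1}+\cdots+N_{D_\rho}$, in the role of $\un{T}$ and with $\id{p}_T=\id{P}_\rho$. The hypotheses that $\id{P}_\rho$ is prime and that $\id{P}_\rho\cap K[\un{\un{\Lambda}}]=\{0\}$ are granted by Theorem \ref{ramero1}, while the separability of $\Frac(\faktor{K[\un{\un{\Lambda}},\un{Y}]}{\id{P}_\rho})$ over $K(\un{\un{\Lambda}})$ is automatic since $\operatorname{char}K=0$. Statement (iii) of Theorem \ref{main11} then produces a Hilbert subset of $K^{N_{D_1}+\cdots+N_{D_\rho}}$ such that, for every $\un{\un{\lambda}}$ in it, the specialized $K$-variety $V_K(\id{p},\mathcal{Q}_{D_1}(\un{\lambda}_1,\un{Y}),\ldots,\mathcal{Q}_{D_\rho}(\un{\lambda}_\rho,\un{Y}))$ is irreducible, with dimension equal to that of $V_{K(\un{\un{\Lambda}})}(\tilde{\id{P}}_\rho)$, namely $d-\rho$.

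To conclude I would note that this $K$-variety is precisely $V\cap V_{\rho,\un{\un{\lambda}}}$: the scheme-theoretic intersection of $V=V_K(\id{p})$ with $V_{\rho,\un{\un{\lambda}}}=V_K(\mathcal{Q}_{D_1}(\un{\lambda}_1,\un{Y}),\ldots,\mathcal{Q}_{D_\rho}(\un{\lambda}_\rho,\un{Y}))$ is the $K$-variety of the sum of the two ideals, which is the specialized ideal just treated. I do not expect a genuine obstacle: all the substance lies inside Theorems \ref{ramero1} and \ref{main11}, and the argument merely chains them. The only points that call for a line of comment are that $d>0$ is exactly what makes $\id{p}$ non-maximal, so that Theorem \ref{ramero1} is available at all, and the elementary identification of $V\cap V_{\rho,\un{\un{\lambda}}}$ with the variety of the summed ideal, which is what lets the irreducibility and the dimension be read off from Theorem \ref{main11}.
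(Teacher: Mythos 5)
Your proposal is correct and follows essentially the same route as the paper's proof: apply Theorem \ref{ramero1} to get that $\id{P}_\rho$ is prime with $\id{P}_\rho\cap K[\un{\un{\Lambda}}]=\{0\}$ and the right dimension, then feed it into Theorem \ref{main11} (statement (iii)), with separability supplied by $\operatorname{char}K=0$. The one small explicit addition you make --- observing that $d>0$ forces $\id{p}$ to be non-maximal so Theorem \ref{ramero1} applies --- is a legitimate checkpoint the paper leaves implicit.
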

Theorem \ref{main2} is the special case for which $\rho=1$ and $\mathcal{Q}_{S,R}(\un{\Lambda},\un{Y})$ is the
generic polynomial of degree $D$.
\begin{proof}
By Theorem \ref{ramero1}, the ideal $\id{P}_{\rho}=\langle\id{p},\mathcal{Q}_{D_1},\ldots,\mathcal{Q}_{D_{\rho}}\rangle$ is prime in $K[\un{\un{\Lambda}},\un{Y}]$ and $\id{P}_{\rho}\cap K[\un{\un{\Lambda}}]=\{0\}$.
Moreover, as $K$ has characteristic $0$,
$\Frac\left(\faktor{K[\un{\un{\Lambda}},\un{Y}]}{\id{P}_{\un{S}}}\right)$ is separable over $K(\un{\un{\Lambda}},\un{Y})$. Then we can apply Theorem \ref{main11} to $\id{P}_{\rho}$:
using statement (iii) of the theorem,
for $\un{\un{\lambda}}=(\un{\lambda}_1,\ldots,\un{\lambda}_{\rho})$ in a Hilbert subset of $K^{N_{D_1}+\ldots+N_{D_{\rho}}}$, the $K$-variety
\[V_K(\id{p},\mathcal{Q}_{D_1}(\un{\lambda}_1,\un{Y}),\ldots,\mathcal{Q}_{D_{\rho}}(\un{\lambda}_{\rho},\un{Y}))=V\cap V_{\rho,\un{\un{\lambda}}}\]
is an irreducible $K$-variety of dimension $d-\rho$.
\end{proof}
\begin{oss}
At the beginning of the section, we chose to take as $\mathcal{Q}_{S_i,R_i}(\un{\Lambda}_i,\un{Y})$ the
generic polynomial of degree $D_i$. However, if we fix the ideal $\id{p}$ at the beginning, Corollary \ref{main21} holds more generally if we take for $S_i$ a \un{subset} of all possible monomials such that
the triple $(\id{p},S_i,0)$ satisfies hypothesis (\ref{H}) and Theorem \ref{ramero1}.
\end{oss}

\subsection{Specialization at polynomials}
In the previous sections the surrounding ring used to define the quasi-generic polynomials
was $K[\un{Y}]$, while in this section it will be $K[\un{T},\un{Y}]$.\par
Fix $\rho>0$. For $i=1,\ldots,\rho$, fix a non-negative
integer $D_i$ and then consider the quasi-generic polynomial $\mathcal{Q}_{S_i,R_i}(\un{\Lambda}_i,\un{Y})$
of basis the set $S_i$ of all the power products in the variables $\un{Y}$ of degree $\leq D_i$ and $R_i=-T_i$; the additional variables $\un{\Lambda}_i$ form the \say{set of parameters} of Definition \ref{qg}. Thus, we have
\[\mathcal{Q}_{S_i,R_i}(\un{\Lambda}_i,\un{T},\un{Y})=\sum_{j=1}^{N_{D_i}}\Lambda_{i,j}\hspace{1pt}Q_j(\un{Y})-T_i
=\mathcal{U}_{D_i}(\un{\Lambda}_i,\un{Y})-T_i\text{.}\]
Note that $\mathcal{U}_{D_i}(\un{\Lambda}_i,\un{Y})$, as defined above, is the generic polynomial of degree $D_i$ in the variables $\un{Y}$.\par
According to the definition of quasi-generic polynomial, the power products could be taken in the variables
$\un{T}$ and $\un{Y}$, but we take them only in the variables $\un{Y}$ for our
purpose.\par
The following statement generalizes Lemma \ref{ramero} for this set of data.
\begin{thm}\label{ramero2}
Let $K$ be a field. Let $\id{p}$ be a prime ideal of $K[\un{T},\un{Y}]$ such that $\id{p}\cap K[\un{T}]=\{0\}$ and
$\dim_{K(\un{T})}\left(\faktor{K(\un{T})[\un{Y}]}{\tilde{\id{p}}}\right)=d>0$, where $\tilde{\id{p}}$
is the extension of $\id{p}$ to $K(\un{T})[\un{Y}]$. Let $\mathcal{Q}_{S_1,R_1}(\un{\Lambda}_1,\un{T},\un{Y}),\ldots,\mathcal{Q}_{S_{\rho},R_{\rho}}(\un{\Lambda}_{\rho},\un{T},\un{Y})$ be the quasi-generic polynomials defined above for $0<\rho\leq r$. 
Then the ideal $\id{P}_{\un{S}}=
\langle\id{p},\mathcal{Q}_{S_1,R_1},\ldots,\mathcal{Q}_{S_{\rho},R_{\rho}}\rangle$ is a prime ideal of $K[\un{\un{\Lambda}},\un{T},\un{Y}]$ such that
$\id{P}_{\un{S}}\cap K[\un{\un{\Lambda}}]=\{0\}$ and
\[\dim_{K(\un{\un{\Lambda}})}\left(\faktor{K(\un{\un{\Lambda}})[\un{T},\un{Y}]}{\tilde{\id{P}}_{\un{S}}}\right)=d+r-\rho\]
where $\tilde{\id{P}}_{\un{S}}$ is the extension of $\id{P}_{\un{S}}$ to $K(\un{\un{\Lambda}})[\un{T},\un{Y}]$.
\end{thm}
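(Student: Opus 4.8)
The plan is to argue by recursion on $\rho$, working over a growing base field, in the spirit of the proof of Theorem \ref{ramero1}; the only new feature is that here $R_i=-T_i$ rather than $R_i=0$. For $0\le k\le\rho$ set $F_k=K(\un\Lambda_1,\dots,\un\Lambda_k)$ and $\tilde{\id P}_k=\langle\id p,\mathcal{Q}_{S_1,R_1},\dots,\mathcal{Q}_{S_k,R_k}\rangle\subseteq F_k[\un T,\un Y]$, so that $\tilde{\id P}_0=\id p$ and $\tilde{\id P}_\rho$ is the extension of $\id P_{\un S}$ to $K(\un{\un\Lambda})[\un T,\un Y]$. I would prove, by induction on $k$, the statement $(\star_k)$: \emph{$\tilde{\id P}_k$ is a prime ideal of $F_k[\un T,\un Y]$ with $\height\tilde{\id P}_k=\height\id p+k$ — equivalently, since $\height\id p=\height\tilde{\id p}=s-d$ by Lemma \ref{fractions}, $\dim_{F_k}\bigl(F_k[\un T,\un Y]/\tilde{\id P}_k\bigr)=d+r-k$ — and the images of $T_{k+1},\dots,T_r$ in $F_k[\un T,\un Y]/\tilde{\id P}_k$ are algebraically independent over $F_k$.} The base case $k=0$ is immediate: $\id p\cap K[\un T]=\{0\}$ makes $T_1,\dots,T_r$ algebraically independent over $K$ in $K[\un T,\un Y]/\id p$, and Lemma \ref{fractions} together with the hypothesis on $\dim_{K(\un T)}$ gives $\height\id p=s-d$.

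For the inductive step I would invoke Lemma \ref{ramero} over the field $F_{k-1}$, taking its block of variables to be $(\un T,\un Y)$, its prime ideal to be $\tilde{\id P}_{k-1}$, and its quasi-generic polynomial to be $\mathcal{Q}_{S_k,R_k}(\un\Lambda_k,\un T,\un Y)=\mathcal{U}_{D_k}(\un\Lambda_k,\un Y)-T_k$ (parameters $\un\Lambda_k$, base set $S_k$, and $R=R_k=-T_k\in F_{k-1}[\un T,\un Y]$). The genuinely new ingredient is the verification of hypothesis (\ref{H}) for the triple $(\tilde{\id P}_{k-1},S_k,R_k)$: because $R_k=-T_k$, the value $\phi_{S_k,R_k}(0,\dots,0)$ is the class of $-T_k$ modulo the field $E$ of elements algebraic over $F_{k-1}$ in $B\coloneqq F_{k-1}[\un T,\un Y]/\tilde{\id P}_{k-1}$, and by the last clause of $(\star_{k-1})$ the image of $T_k$ is transcendental over $F_{k-1}$, hence $\notin E$, so (\ref{H}) holds. (When $D_k\ge 1$ this can also be seen by a dimension count: were $\phi_{S_k,R_k}$ identically zero, the images of $Y_1,\dots,Y_s$ — and hence, through the relations $\mathcal{Q}_{S_j,R_j}$, those of $T_1,\dots,T_k$ — would all lie in $E$, so $B$ would be generated over $E$ by $T_{k+1},\dots,T_r$ and thus have dimension $\le r-k<d+r-(k-1)$, contradicting $(\star_{k-1})$ since $d>0$.) Lemma \ref{ramero} then yields that the extension of $\langle\tilde{\id P}_{k-1},\mathcal{Q}_{S_k,R_k}\rangle$ to $F_{k-1}(\un\Lambda_k)[\un T,\un Y]=F_k[\un T,\un Y]$ — which is exactly $\tilde{\id P}_k$ — is prime of height $\height\tilde{\id P}_{k-1}+1$, giving the first half of $(\star_k)$.

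The remaining clause of $(\star_k)$ is where the genericity of the new parameters $\un\Lambda_k$ enters, and it is the delicate part. Modulo $\mathcal{Q}_{S_k,R_k}$ one has $T_k=\mathcal{U}_{D_k}(\un\Lambda_k,\un Y)$, so passing from $B$ to $F_k[\un T,\un Y]/\tilde{\id P}_k$ amounts to extending scalars by the fresh indeterminates $\un\Lambda_k$ and then eliminating the single variable $T_k$, replacing it by an $\un\Lambda_k$-linear form in the monomials $Q_1(\un Y),\dots,Q_{N_{D_k}}(\un Y)$; this operation does not involve $T_{k+1},\dots,T_r$, and a transcendental element remains transcendental after one adjoins new indeterminates to the base field, so $T_{k+1},\dots,T_r$ stay algebraically independent over $F_k$, which completes $(\star_k)$. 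I expect this last clause to be the main obstacle to write out carefully: it is the analogue of the assertion $\id P_\rho\cap K[\un{\un\Lambda}]=\{0\}$ that is used, and left implicit, in Theorem \ref{ramero1}, and it is exactly the place where one must check that specializing the ideal in the direction of $\un\Lambda_k$ does not manufacture an algebraic relation among the still-free variables $T_{k+1},\dots,T_r$.

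Finally, $(\star_\rho)$ says that $\tilde{\id P}_\rho=\tilde{\id P}_{\un S}$ is a prime ideal of $K(\un{\un\Lambda})[\un T,\un Y]$ with $\dim_{K(\un{\un\Lambda})}\bigl(K(\un{\un\Lambda})[\un T,\un Y]/\tilde{\id P}_{\un S}\bigr)=d+r-\rho$, which is the dimension assertion of the theorem. Being prime, $\tilde{\id P}_{\un S}$ is a proper ideal and so contains no unit; since every nonzero element of $K[\un{\un\Lambda}]$ is a unit in $K(\un{\un\Lambda})[\un T,\un Y]$, it follows that $\id P_{\un S}\cap K[\un{\un\Lambda}]=\{0\}$, where $\id P_{\un S}=\tilde{\id P}_{\un S}\cap K[\un{\un\Lambda},\un T,\un Y]$; and $\id P_{\un S}$ is prime as the contraction of a prime ideal, with $\tilde{\id P}_{\un S}$ its extension back to $K(\un{\un\Lambda})[\un T,\un Y]$. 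This establishes all the assertions.
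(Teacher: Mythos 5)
Your overall architecture matches the paper's: an induction on $\rho$, invoking Lemma \ref{ramero} at each step, with $R_k=-T_k$. Your verification of hypothesis (\ref{H}) is in fact cleaner than the paper's: you check it directly via the transcendence of $T_k$, whereas the paper first quotients out $T_1,\dots,T_{k-1}$ so as to fit the hypotheses of Lemma \ref{lemh}(ii). This is a genuine simplification of that part.

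However, there is a real gap exactly where you flag it: the last clause of $(\star_k)$ is not established by the argument you give. You say that passing from $B$ to $F_k[\un T,\un Y]/\tilde{\id P}_k$ amounts to "extending scalars by the fresh indeterminates $\un\Lambda_k$ and then eliminating the single variable $T_k$", and that "a transcendental element remains transcendental after one adjoins new indeterminates to the base field." But these two operations do not stack the way this suggests. After quotienting by $\mathcal{Q}_{S_k,R_k}$, the coordinate $\Lambda_{k,1}$ is \emph{not} a fresh indeterminate over $\Frac(B)$: it has become identified with $T_k-\sum_{i\geq 2}\Lambda_{k,i}Q_i(\un Y)$, an element involving $T_k$ and $\un Y$, both of which live in $B$ and carry the relations coming from $\id p$. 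So $F_k$ sits inside $\Frac\bigl(F_k[\un T,\un Y]/\tilde{\id P}_k\bigr)$ as a subfield generated by $F_{k-1}$, the indeterminates $\Lambda_{k,2},\dots,\Lambda_{k,N_{D_k}}$, and this non-trivial combination — not as a purely transcendental extension with $B$ untouched. Nor is $B[\un\Lambda_k]$ a polynomial ring over some subring containing $F_{k-1}[T_{k+1},\dots,T_r]$, so the phrase "this operation does not involve $T_{k+1},\dots,T_r$" does not by itself yield that quotienting leaves them algebraically independent. A naive transcendence-degree count also fails: it would only force $d\geq s$.

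The clean repair is the very device the paper uses, and it requires no new idea beyond what you already set up: at stage $k$, apply Lemma \ref{ramero} over the base field $F_{k-1}(T_{k+1},\dots,T_r)$ rather than over $F_{k-1}$, taking the polynomial ring to be $F_{k-1}(T_{k+1},\dots,T_r)[T_1,\dots,T_k,\un Y]$. The last clause of $(\star_{k-1})$ guarantees that $\tilde{\id P}_{k-1}$ extends to a (proper) prime ideal there, by Lemma \ref{fractions}; and the same clause gives that $T_k$ is transcendental over $F_{k-1}(T_{k+1},\dots,T_r)$ modulo that ideal, which is what your (H)-check needs in this localized ring. Lemma \ref{ramero} then yields that $\langle\tilde{\id P}_{k-1},\mathcal{Q}_{S_k,R_k}\rangle$ is prime in $F_k(T_{k+1},\dots,T_r)[T_1,\dots,T_k,\un Y]$, so it is \emph{proper} there, and by the bijective correspondence with $F_k[\un T,\un Y]$ this is exactly the statement $\tilde{\id P}_k\cap F_k[T_{k+1},\dots,T_r]=\{0\}$ — i.e.\ the last clause of $(\star_k)$. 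Your first clause of $(\star_k)$ then follows by contracting back. In short: the missing algebraic-independence assertion is not a consequence of "not touching $T_{k+1},\dots,T_r$"; it must come from running the argument over the localized base, which is precisely what the paper's cascade of rings $K(\un\Lambda_1,\dots,\un\Lambda_j,T_{j+2},\dots,T_r)[\dots]$ is doing.
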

\begin{proof}
We proceed by recursion on $\rho$. \par Assume $\boldsymbol{\rho=1}$. As $\id{p}\cap K[\un{T}]=\{0\}$, in particular, $\id{p}\cap K[T_2,\ldots,T_r]=\{0\}$,
so the ideal $\tilde{\id{p}}_{T_2}=\langle\id{p}\rangle\subset K(T_2,\ldots,T_r)[T_1,\un{Y}]$ is non-maximal by Lemma \ref{fractions}.\par
By construction, the set $S_1$ contains all the power products in the variables $\un{Y}$ of degree $\leq D_1$,
hence $Y_j$, for all $j=1,\ldots,s$, and we have set $R_1(\un{T},\un{Y})=-T_1$. So, by Lemma \ref{lemh}(ii),
the triple $(\tilde{\id{p}}_{T_2}, S_1,-T_1)$ satisfies
hypothesis (\ref{H}) with the ring $K[\un{Y}]$ in Lemma \ref{lemh} replaced by
$K(T_2,\ldots,T_r)[T_1,\un{Y}]$.\par
Therefore, by Lemma \ref{ramero}, the ideal \[\tilde{\id{P}}_{T_2,S_1}=\langle\id{p},\mathcal{Q}_{S_1,R_1}\rangle\subset K(\un{\Lambda}_1,T_2,\ldots,T_r)[T_1,\un{Y}]\]
is a prime ideal and $\height\tilde{\id{P}}_{T_2,S_1}=\height\id{p}+1$.\par
By the classical bijective correspondence between extended and contracted ideals in rings of fractions
(e.g. \cite[Proposition 3.11(iv)]{am}), to the ideal $\tilde{\id{P}}_{T_2,S_1}$ we associate the prime ideal
\[\tilde{\id{P}}_{S_1}=\langle\id{p},\mathcal{Q}_{S_1,R_1}\rangle\subset 
K(\un{\Lambda}_1)[\un{T},\un{Y}].\] In the same manner, we associate the prime ideal
\[\id{P}_{S_1}=\langle\id{p},\mathcal{Q}_{S_1,R_1}\rangle
\subset K[\un{\Lambda}_1,\un{T},\un{Y}]\] and, in addition, we have $\id{P}_{S_1}\cap K[\un{\Lambda}_1]=\{0\}$.\par
Moreover,
by Lemma \ref{fractions}, $\height \tilde{\id{P}}_{S_1}=\height\tilde{\id{P}}_{T_2,S_1}=\height \id{p}+1$,
so \[\dim_{K(\un{\Lambda}_1)}\left(\faktor{K(\un{\Lambda}_1)[\un{T},\un{Y}]}{\tilde{\id{P}}_{S_1}}\right)=
r+s-(\height\id{p}+1)=d+r-1\text{.}\]\par\smallskip
For simplicity in the notation, we explain only the case $\boldsymbol{\rho=2}$. The case of an arbitrary $\rho\leq r$
can be easily deduced.\par
Consider the prime ideal \[\tilde{\id{P}}_{T_3,S_1}=\langle\id{p},\mathcal{Q}_{S_1,R_1}\rangle
\subset K(\un{\Lambda}_1,T_3,\ldots,T_r)[T_1,T_2,\un{Y}]\] deduced from $\id{P}_{S_1}$ by
applying the classical bijective correspondence. Denote by \[\tilde{\id{P}}_{T_1^*,T_3} \subset K(\un{\Lambda}_1,T_3,\ldots,T_r)[T_2,\un{Y}]\]
the ideal obtained by replacing $T_1$ with the generic polynomial, previously denoted by $\mathcal{U}_{D_1}(\un{\Lambda}_1,\un{Y})$, in
$\tilde{\id{P}}_{T_3,S_1}$. For $\rho=2$, the ideal $\tilde{\id{P}}_{T_1^*,T_3}$ will play the role
played by $\tilde{\id{p}}_{T_2}$ in the case $\rho=1$.\par
The ideal $\tilde{\id{P}}_{T_1^*,T_3}$
is formally constructed through the quotient morphism, which we denote by $\pi_1$, sending $\tilde{\id{P}}_{T_3,S_1}$ to
\begin{equation*}
\faktor{\tilde{\id{P}}_{T_3,S_1}}{\langle\mathcal{Q}_{S_1,R_1}\rangle}\cong\tilde{\id{P}}_{T_1^*,T_3}.
\footnotemark
\end{equation*}
\footnotetext{Recall that $\mathcal{Q}_{S_1,R_1}=\mathcal{U}_{D_1}(\un{\Lambda}_1,\un{Y})-T_1$}
It follows that $\tilde{\id{P}}_{T_1^*,T_3}$ is prime. Moreover, by \cite[Proposition 9.2]{eisenbud},
we have
$\height\tilde{\id{P}}_{T_1^*,T_3}=\height\tilde{\id{P}}_{T_3,S_1}-1$. Now, by Lemma \ref{fractions} and
the case $\rho=1$, we have
$\height\tilde{\id{P}}_{T_3,S_1}=\height\id{P}_{S_1}=\height \id{p}+1$. Therefore, we obtain:
\begin{equation}\label{height}
    \height\tilde{\id{P}}_{T_1^*,T_3}=\height \id{p},
\end{equation}
so $\tilde{\id{P}}_{T_1^*,T_3}$ is a non-maximal prime ideal of $K(\un{\Lambda}_1,T_3,\ldots,T_r)[T_2,\un{Y}]$.\par
Consider the polynomial $\mathcal{Q}_{S_2,R_2}(\un{\Lambda}_2,\un{T},\un{Y})$. By construction,
$S_2$ contains $Y_j$ for all $j=1,\ldots,s$ and $R_2(\un{T},\un{Y})=-T_2$. By Lemma \ref{lemh}(ii), 
the triple $(\tilde{\id{P}}_{T_1^*,T_3},S_2,-T_2)$
satisfies hypothesis (\ref{H}) with the ring $K[\un{Y}]$ in Lemma \ref{lemh} replaced by the ring $K(\un{\Lambda}_1,T_3,\ldots,T_r)[T_2,\un{Y}]$.\par
From Lemma \ref{ramero} applied to $\tilde{\id{P}}_{T_1^*,T_3}$ and $\mathcal{Q}_{S_2,R_2}$, we deduce that
the ideal \[\tilde{\id{P}}_{T_1^*,T_3,S_2}\coloneqq\langle \tilde{\id{P}}_{T_1^*,T_3},\mathcal{Q}_{S_2,R_2}\rangle\subset K(\un{\Lambda}_1,\un{\Lambda}_2,T_3,\ldots,T_r)[T_2,\un{Y}]\] is
prime and has height
$\height\tilde{\id{P}}_{T_1^*,T_3,S_2}=\height\id{p}+1$.\par
Using the morphism $\pi_1$, we obtain that the ideal
\[\tilde{\id{P}}_{T_1^*,T_3,S_2}+\ker\pi_1=\langle \tilde{\id{P}}_{T_3,S_1},\mathcal{Q}_{S_2,R_2}\rangle=\langle\id{p},\mathcal{Q}_{S_1,R_1},\mathcal{Q}_{S_2,R_2}\rangle\]
is a prime ideal of $K(\un{\Lambda}_1,\un{\Lambda}_2,T_3,\ldots,T_r)[T_1,T_2,\un{Y}]$
and that its height is equal to
\[\height\tilde{\id{P}}_{T_1^*,T_3,S_2}+1=\height\id{p}+2.\]\par
Applying the classical bijective correspondence, the ideal
\[\id{P}_{S_1,S_2}=\langle\id{p},\mathcal{Q}_{S_1,R_1},\mathcal{Q}_{S_2,R_2}\rangle\subset K[\un{\Lambda}_1,\un{\Lambda}_2,\un{T},\un{Y}] \] is prime and such that $\id{P}_{S_1,S_2}\cap
K[\un{\Lambda}_1,\un{\Lambda}_2]=\{0\}$. Moreover, the ideal
\[\tilde{\id{P}}_{S_1,S_2}=\langle\id{p},\mathcal{Q}_{S_1,R_1},\mathcal{Q}_{S_2,R_2}\rangle\subset K(\un{\Lambda}_1,\un{\Lambda}_2)[\un{T},\un{Y}]\] is prime. By Lemma \ref{fractions},
both these ideals have height $\height\id{p}+2$.\par
In terms of dimensions, this is equivalent to saying that
\[\dim_{K(\un{\Lambda}_1,\un{\Lambda}_2)}\left(\faktor{K(\un{\Lambda}_1,\un{\Lambda}_2)[\un{T},\un{Y}]}{\tilde{\id{P}}_{S_1,S_2}}\right)=d+r-2\text{.}\]
\end{proof}
Fix $\rho=r$. Theorem \ref{main3} follows from Theorem \ref{ramero2} conjoined with Theorem \ref{main11}. Differently
from Theorem \ref{main1}, we need to assume $K$ of characteristic $0$ to guarantee
the separability required in the statement of Theorem \ref{main1}.
\begin{proof}[Proof of Theorem \ref{main3}]
By assumption, $V_T$ is a $K(\un{T})$-variety so the ideal $\tilde{\id{p}}\coloneqq\langle
P_1,\ldots,P_l\rangle$ is a prime ideal of $K(\un{T})[\un{Y}]$. Equivalently, $\id{p}\coloneqq\langle P_1,\ldots,P_l\rangle$ is a prime
ideal of $K[\un{T}]$ and $\id{p}\cap K[\un{T}]=\{0\}$.\par
For $i=1,\ldots,r$, let $\mathcal{Q}_{S_i,R_i}$ be the quasi-generic polynomials defined at
the beginning of the section, i.e.
\[\mathcal{Q}_{S_i,R_i}(\un{\Lambda}_i,\un{T},\un{Y})=\sum_{j=1}^{N_{D_i}}\Lambda_{i,j}\hspace{1pt}Q_j(\un{Y})-T_i
=\mathcal{U}_{D_i}(\un{\Lambda}_i,\un{Y})-T_i\text{.}\]\par
The polynomials $\mathcal{Q}_{S_i,R_i}$ and $\id{p}$ satisfy the hypotheses of Theorem \ref{ramero2}, so
the ideal $\id{P}_{\un{S}}=\langle\id{p},\mathcal{Q}_{S_1,R_1},\ldots,\mathcal{Q}_{S_r,R_r}\rangle$ is a prime ideal of
$K[\un{\un{\Lambda}},\un{T},\un{Y}]$ such that $\id{P}_{\un{S}}\cap K[\un{\un{\Lambda}}]=\{0\}$ and
$\height\id{P}_{\un{S}}=\height\id{p}+r$.\par Denote by $\tilde{\id{P}}_{\un{S}}$ the extension of
$\id{P}_{\un{S}}$ to $K(\un{\un{\Lambda}})[\un{T},\un{Y}]$. By the classical bijective correspondence the ideal
$\tilde{\id{P}}_{\un{S}}$ is prime.\par
For $i=1,\ldots,r$, denote by $\pi_i$ the quotient morphism by the ideal $\langle\mathcal{Q}_{S_i,R_i}\rangle$.
Denote by $\tilde{\id{P}}_{\Lambda}$ the ideal of $K(\un{\un{\Lambda}})[\un{Y}]$ obtained by replacing $T_i$ with $\mathcal{U}_{D_i}$ for
every $i$.
Applying, recursively, all the morphisms $\pi_i$ to the ideal $\tilde{\id{P}}_{\un{S}}$, in the same manner
as for (\ref{height}), we obtain that $\tilde{\id{P}}_{\Lambda}$ is a prime ideal of $K(\un{\un{\Lambda}})[\un{Y}]$ and
\[\height\tilde{\id{P}}_{\Lambda}=\height\id{p}.\]\par
By bijective correspondence, the ideal
\[\id{P}_{\Lambda}=\langle P_1(\un{\mathcal{U}}(\un{\un{\Lambda}},\un{Y}),\un{Y}),\ldots,P_l(\un{\mathcal{U}}(\un{\un{\Lambda}},\un{Y}),\un{Y})\rangle,\]
where $\un{\mathcal{U}}(\un{\un{\Lambda}},\un{Y})=(\mathcal{U}_{D_1}(\un{\Lambda}_1,\un{Y}),\ldots,\mathcal{U}_{D_r}(\un{\Lambda}_r,\un{Y}))$,
is a prime ideal of $K[\un{\un{\Lambda}},\un{Y}]$ such that $\id{P}_{\Lambda}\cap K[\un{\un{\Lambda}}]=\{0\}$ and $\height \id{P}_{\Lambda}=\height\id{p}$.\par
Finally, we can apply Theorem \ref{main11} to $\id{P}_{\Lambda}$. For $\un{\un{\lambda}}$ in $K^{D_1+\ldots+D_r}$, consider the ideal
\[\id{P}_{\lambda}=\langle P_1(\un{\mathcal{U}}(\un{\un{\lambda}},\un{Y}),\un{Y}),\ldots,P_l(\un{\mathcal{U}}(\un{\un{\lambda}},\un{Y}),\un{Y})\rangle
=\langle P_1(\un{U}(\un{Y}),\un{Y}),\ldots,P_l(\un{U}(\un{Y}),\un{Y})\rangle\]
where $\un{\mathcal{U}}(\un{\un{\lambda}},\un{Y})=(\mathcal{U}_1(\un{\lambda}_1,\un{Y}),\ldots,\mathcal{U}_r(\un{\lambda}_r,\un{Y}))$ and $\un{U}(\un{Y})=(U_1(\un{Y}),\ldots,U_r(\un{Y}))$ with $U_i(\un{Y})=\mathcal{U}_i(\un{\lambda}_i,\un{Y})$. By Theorem \ref{main11}, for every
$\un{\un{\lambda}}$ in some Hilbert subset of $K^{D_1+\ldots+D_r}$,
the ideal $\id{P}_{\lambda}$ is prime and has height $\height\id{p}$, i.e. $V_U=V_K(\id{P}_{\lambda})$ is an irreducible $K$-variety and
\[\dim_K V_U=\dim_K\left(\faktor{K[\un{Y}]}{\id{P}_{\lambda}}\right)=s-\height\id{p}=d\text{.}\]\par
Recalling the isomorphism between $\A_K^{D_1+\ldots+D_r}$ and $\prod_{i=1}^r K[\un{\Lambda}_i]_{D_i}$
that we mentioned in the Introduction, taking a Hilbert subset of $K^{D_1+\ldots+D_r}$ is equivalent
to taking a Hilbert subset of $\prod_{i=1}^r K[\un{\Lambda}_i]_{D_i}$.
\end{proof}
\begin{oss}\label{3imp1}
As we mentioned in Remark \ref{rem3}(b), taking $D_i=0$, for every $i$, implies Theorem \ref{main1} in characteristic $0$. Indeed, for every $i=1,\ldots,r$, take \[\mathcal{Q}_{S_i,R_i}=\Lambda_{i,1}-T_i.\] The map $\phi_{R_i,S_i}$ sends $0$, the only point of $K^0$, to
$-T_i$. The element $-T_i$ is clearly transcendental over $K$ and, by hypothesis, $-T_i$ is not in $\id{p}$, so
$\phi_{R_i,S_i}$ is not identically $0$ for every $i$. Therefore, we apply Theorem \ref{main3} and
Theorem \ref{main1} follows.
\end{oss}
\bigskip

\bibliographystyle{alpha}
\bibliography{biblio}
\bigskip

\end{document}